\def\Orb{{\mathcal O}}
\def\hw{{\hat w}}
\def\hv{{\hat v}}
\def\hG{{\hat G}}
\def\hs{{\hat s}}
\def\hnu{{\hat \nu}}
\def\hT{{\hat T}}
\def\hB{{\hat B}}
\def\hW{{\hat W}}
\def\halpha{{\hat \alpha}}
\def\longto{\longrightarrow}
\def\PP{{\mathbb P}}
\def\QQ{{\mathbb Q}}
\def\CC{{\mathbb C}}
\def\Spin{{\rm Spin}}
\def\l{\langle}
\def\r{\rangle}
\def\SL{{\rm SL}}
\def\Li{{\mathcal{L}}}
\def\Mi{{\mathcal{M}}}
\def\Qleft{\hspace*{-1pt}\setminus\hspace*{-3pt}}
\newtheorem{lemma}{Lemma}
\newtheorem{prop}{Proposition}
\newtheorem{theo}{Theorem}
\newtheorem{coro}{Corollary}
\newtheorem{ex}{Example}
\newtheorem{remark}{Remark}
\begin{document}
\title{Generalizations of the PRV conjecture, II}
\author{PL. Montagard \& B. Pasquier \& N. Ressayre\footnote{N.R. was partially supported by the French National Research Agency (ANR-09-JCJC-0102-01).}}



\maketitle

\begin{abstract}
Let $G\subset\hat{G}$ be two complex connected reductive groups. We deals with the hard problem of finding sub-$G$-modules of a given irreducible $\hat{G}$-module. In the case where $G$ is diagonally embedded in $\hat{G}=G\times G$, S.~Kumar and O.~Mathieu found some of them, proving the PRV conjecture. Recently, the authors generalized the PRV conjecture on the one hand to the case where $\hat{G}/G$ is spherical of minimal rank, and on the other hand giving more sub-$G$-modules in the classical case $G\subset G\times G$. In this paper, these two recent generalizations are combined in a same more general result.
\end{abstract}

\textbf{Mathematics Subject Classification} 22E46 17B10 14L24\\

\textbf{Keywords} Branching rules, Affine spherical homogeneous spaces of minimal rank, Tensor product decomposition, PRV conjecture.\\

\section{Introduction}

Let $G$ be a complex connected reductive group.
Let $T$ be a maximal torus of $G$ and let $B$ be a Borel subgroup
containing $T$. The  Weyl
group is denoted by $W$.
The irreducible $G$-module of highest weight $\mu$ is denoted by
$V_G(\mu)$.
For any weight $\mu$ of $T$, there exists a unique dominant weight
$\overline{\mu}$ in the $W$-orbit of $\mu$. The representation
$V_G(\bar\mu)$ is called the {\it irreducible $G$-module with extremal
  weight $\mu$}. 
Parthasarathy-Ranga Rao-Varadarajan conjectured in the sixties the following
statement.

\bigskip
\noindent{\it The PRV conjecture.}

Let $\mu$ and $\nu$ be two dominant weights.
Then, for any $w\in W$, the irreducible $G$-module $V_G(\overline{\mu+w\nu})$ 
with extremal weight
$\mu+w\nu$, occurs with multiplicity at least one in $V_G(\mu)\otimes V_G(\nu)$.\\

This conjecture was proved independently by S.~Kumar in \cite{Kumar:prv1} and
O.~Mathieu in \cite{Mathieu:prv}.
This paper is a continuation of  \cite{MPR} where  the authors obtained a new proof and two
generalizations of the PRV conjecture. 


\bigskip
Assume  that $G$ is a subgroup of a bigger connected reductive group $\hat G$.
The decomposition of the tensor product concerned by the PRV
conjecture is an example ({\it i.e.} when $\hG=G\times G$ and $G$ is
diagonally embedded in $\hG$) of the following
problem.

\begin{center}\it
  Find irreducible $G$-submodules in a given  irreducible $\hG$-module.
\end{center}

\bigskip
The homogeneous space $(G\times G)/G$ is  spherical  
of minimal rank (see {\it e.g.} \cite{spherangmin} for a precise definition).
From now on, we consider  two reductive connected
groups $G\subset\hG$ such that $\hG/G$ is spherical 
of minimal rank. Let $\hT$ be the centralizer of $T$ in $\hG$,
then it is well-know that $\hT$ is a maximal torus of $\hG$ (see Lemma~\ref{lemma:centra} in the appendix). We can deduce that the Weyl group $W$ of $G$ is
canonically a subgroup of the Weyl group $\hW$ of $\hG$ (see Corollary~\ref{coro:weyl} in the appendix). 

In \cite{MPR}, we generalized the PRV conjecture:
for any irreducible  $\hG$-module $\hat V$, we give an explicit family of irreducible $G$-submodules of
$\hat V$ parametrized by $W\Qleft\hW$. 

Let $\mu$ and $\nu$ be two dominant weights of $G$.
In \cite{MPR},  we also improve the
PRV conjecture by constructing a  family of irreducible
submodules of $V_G(\mu)\otimes V_G(\nu)$,  strictly  containing the PRV components.
The cardinality of this family is not bounded independently of the
couple $(\lambda,\mu)$.
More precisely, this family is the set of dominant weights
belonging to some explicit line segments with (at least) one PRV
component as end point.

The main result of this paper gives a  similar family of
irreducible sub-$G$-modules of any irreducible $\hG$-module $\hat V$.
In particular, this result is a common generalization of the two
results of \cite{MPR} explained before.

\bigskip

The precise statement needs some preparation.
If $\alpha$ is a root of $(G,T)$, $\alpha^\vee$ denotes the
corresponding coroot.
There exists a unique   Borel subgroup $\hat B$ containing $\hT$ such
that $\hat B\cap G=B$ (see for example \cite[Proposition~2.2]{spherangmin}).
Consider the restriction map $\rho:\,X(\hat T)\longto X(T)$ from the character
group of $ \hat T$ to this of $T$.
Let  $X(T)^+$ (resp. $X(\hat T)^+$) denote the set of dominant weights
of $G$ (resp. of $\hG$).
Let $\hat\nu\in X(\hat T)^+$;
in \cite{MPR}, we proved that for any $\hw\in \hW$,
$V_G(\overline{\rho(\hw\hnu)})$ is a submodule of $V_{\hG}(\hnu)$. 
Such a submodule of $V_{\hG}(\hnu)$ is called a {\it PRV component}. 
Recall that $W$ is canonically a subgroup of $\hW$ and 
observe  that, for any $w$ in $W$,
$\overline{\rho(\hw\hnu)}=\overline{\rho(w\hw\hnu)}$. 
In particular, the  PRV components are parametrized by
$W\Qleft\hW$.

Let $\Delta$ and  $\hat \Delta$ denote the set of simple roots of
$G$ and $\hG$.
By \cite[Lemma~4.6]{spherangmin}, $\rho(\hat \Delta)=\Delta$. More precisely, for any 
$\alpha\in\Delta$, we have the following alternative:
\begin{enumerate}
\item there exists a unique $\halpha_0\in\hat\Delta$ such that
$\rho(\halpha_0)=\alpha$; or
\item there exist exactly two  simple roots $\halpha_1$ and
  $\halpha_2$ in $\hat\Delta$ such that
$\rho(\halpha_1)=\rho(\halpha_2)=\alpha$.
Moreover $\halpha_1$ and $\halpha_2$ are orthogonal.
\end{enumerate}
The set of simple roots satisfying the first condition is
denoted by $\Delta_1$ and the set of those satisfying the second one is denoted
by $\Delta_2$.

\begin{theo}\label{th:main}
Recall that $G\subset \hG$ are connected reductive groups and that
$\hG/G$ is assumed to be spherical of
minimal rank. 
Let $\hnu$ be a dominant weight of $\hG$ and let $\hw\in\hW$ such that
$\rho(\hw\hnu)$ is a dominant weight of $G$. 
Let $\alpha\in\Delta_2$. Index the two simple roots $\halpha_1$ and
$\halpha_2$ in $\hat \Delta$ belonging to the pullback of $\alpha$ by
$\rho$ in  such a way that $\langle
\hw\hnu,\,\halpha_1^\vee\rangle\leq \langle
\hw\hnu,\,\halpha_2^\vee\rangle$. Denote by $S_{\alpha,\hw,\hnu}$ the
line segment in
$X(T)\otimes\QQ$ whose end points are $\rho(\hw\hnu)$ and
$\rho(s_{\halpha_j}\hw\hnu)$. Then for any $\nu\in
S_{\alpha,\hw,\hnu}\cap X(T)^+$ the irreducible $G$-module $V_G(\nu)$ occurs with
multiplicity at least one in $V_\hG(\hnu)$.
\end{theo}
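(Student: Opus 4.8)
The plan is to reduce the theorem to a statement about a single $\mathrm{SL}_2$ (or $\mathrm{SL}_2\times\mathrm{SL}_2$) subgroup attached to the root $\alpha\in\Delta_2$, combined with the PRV component result of \cite{MPR}. First I would fix $\alpha\in\Delta_2$ with its two orthogonal preimages $\halpha_1,\halpha_2\in\hat\Delta$, and observe that the corresponding reflections $s_{\halpha_1},s_{\halpha_2}\in\hW$ commute and each lies outside $W$ (since $W$ acts on $X(T)$ and $\rho$ is $W$-equivariant in the appropriate sense, while the individual $s_{\halpha_i}$ do not descend). The two endpoints of $S_{\alpha,\hw,\hnu}$ are $\rho(\hw\hnu)$ and $\rho(s_{\halpha_j}\hw\hnu)$ where $j$ is the index of the larger pairing; note that $\rho(s_{\halpha_j}\hw\hnu)=\rho(\hw\hnu)-\langle\hw\hnu,\halpha_j^\vee\rangle\,\alpha$, so the segment is $\{\rho(\hw\hnu)-t\alpha : 0\le t\le \langle\hw\hnu,\halpha_j^\vee\rangle\}$, and both endpoints are extremal weights of the form $\overline{\rho(\hat x\hnu)}$ for $\hat x\in\hW$, hence PRV components of $V_\hG(\hnu)$ by \cite{MPR}.

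Next I would pass to a rank-one reduction. Let $\hG_\alpha\subset\hG$ be the semisimple rank $\le 2$ subgroup generated by the root subgroups for $\pm\halpha_1,\pm\halpha_2$ together with $\hT$; because $\halpha_1\perp\halpha_2$ this is (a quotient of) $\mathrm{SL}_2\times\mathrm{SL}_2$, and its intersection with $G$ is the rank-one group $G_\alpha$ attached to $\pm\alpha$, essentially the diagonal $\mathrm{SL}_2$. The key point is that the multiplicity of $V_G(\nu)$ in $V_\hG(\hnu)$ can be bounded below by studying the $\hG_\alpha$-isotypic structure of the extremal weight space, exactly as in the proof of the PRV result: one exhibits a highest weight vector of weight $\nu$ for $B$ inside $V_\hG(\hnu)$ by starting from a PRV highest weight vector (of weight $\rho(\hw\hnu)$, which I may assume dominant by hypothesis) and applying lowering operators from the $\hG_\alpha$ direction. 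Concretely, the $\mathrm{SL}_2\times\mathrm{SL}_2$-module generated by an extremal weight vector $v_{\hw\hnu}$ has, in its $\halpha_1,\halpha_2$ weight-string picture, the structure $V(a)\boxtimes V(b)$ with $a=\langle\hw\hnu,\halpha_1^\vee\rangle\le b=\langle\hw\hnu,\halpha_2^\vee\rangle$ (after conjugating $\hw\hnu$ into the relevant chamber for $\hG_\alpha$); restricting to the diagonal $\mathrm{SL}_2=G_\alpha$ and using Clebsch–Gordan, every weight $\rho(\hw\hnu)-t\alpha$ with $0\le t\le b$ occurs, and the extremal ones $t=a$ resp.\ $t=b$ are the two PRV endpoints. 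One then checks that the corresponding weight vectors are annihilated by the root subgroups $U_\beta$ for all simple $\beta\in\Delta$, using: (a) for simple roots in $\Delta_1$ and for $\beta=\alpha$ itself, the standard $\mathrm{SL}_2$ representation theory inside $V_\hG(\hnu)$ together with dominance of $\rho(\hw\hnu)$; (b) for other $\beta\in\Delta_2$, orthogonality of the relevant preimage coroots to $\halpha_1,\halpha_2$, which makes the $\hG_\alpha$-lowering operators commute with the relevant $U_\beta$.

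The main obstacle is precisely step (b): controlling the action of $U_\beta$ for $\beta\in\Delta\setminus\{\alpha\}$ on the new weight vectors obtained by lowering in the $\alpha$-direction. For the PRV endpoints this is handled in \cite{MPR}, but for interior points $t\in(a,b)$ one gets vectors that are no longer extremal, so the naive "extremal weight vectors are highest weight vectors for the opposite Borel" argument fails and one must genuinely use the $\mathrm{SL}_2\times\mathrm{SL}_2$-module structure and the compatibility of $\hat B$ with $B$ (via $\hat B\cap G=B$ and $\rho(\hat\Delta)=\Delta$). I would organize this by first proving the rank-one case — $\hG=\mathrm{SL}_2\times\mathrm{SL}_2$, $G=\mathrm{SL}_2$ diagonal, which is an explicit Clebsch–Gordan computation giving exactly the segment of weights between $a$ and $b$ — and then bootstrapping to the general case by the standard argument that a weight vector is a $B$-highest weight vector iff it is killed by each simple root subgroup, checking each $\Delta_1$-root via its unique preimage and each $\Delta_2$-root via the orthogonality statement in the dichotomy (ii). The segment-length bookkeeping (why the endpoint is $s_{\halpha_j}\hw\hnu$ with $j$ chosen for the larger pairing, rather than the smaller) falls out because lowering reaches down to weight $\rho(\hw\hnu)-b\,\alpha$, with $b=\langle\hw\hnu,\halpha_2^\vee\rangle$ being the larger of the two pairings by the indexing convention.
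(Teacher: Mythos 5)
Your plan reduces the theorem to explicit lowering inside the $\hG_\alpha\simeq\SL_2\times\SL_2$-submodule generated by the extremal weight vector $v_{\hw\hnu}$, followed by a check that the resulting diagonal highest weight vectors are killed by every simple root subgroup of $G$. That check is the whole difficulty, you flag it yourself as ``the main obstacle,'' and the argument you sketch for it does not close. Two concrete problems. First, $v_{\hw\hnu}$ is an extremal weight vector, not a $B$-highest weight vector: dominance of $\rho(\hw\hnu)$ does not give $E_\beta v_{\hw\hnu}=0$ for simple $\beta$, and the highest weight vector of the PRV component $V_G(\overline{\rho(\hw\hnu)})$ produced by \cite{MPR} is an uncontrolled element of the whole $\rho(\hw\hnu)$-weight space, not $v_{\hw\hnu}$; so there is no canonical vector from which to start lowering. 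Second, your cases (a) and (b) treat only $\beta=\alpha$, $\beta\in\Delta_1$ ``via standard $\SL_2$ theory,'' and $\beta\in\Delta_2$ whose preimages are orthogonal to $\halpha_1,\halpha_2$. The genuinely hard case --- simple roots $\beta$ adjacent to $\alpha$ in the Dynkin diagram, for which $E_\beta$ neither annihilates $v_{\hw\hnu}$ nor commutes with the lowering operators $F_{\halpha_1},F_{\halpha_2}$ --- is not addressed, and it is exactly the obstruction that forces the known proofs of the PRV conjecture (Kumar, Mathieu) and of this theorem to be geometric rather than by direct weight-vector manipulation. There is in addition a bookkeeping error that makes your conclusion false already in the rank-one model: for the diagonal $\SL_2$ in $\SL_2\times\SL_2$, $V(a)\otimes V(b)$ contains $V(a+b-2t)$ only for $0\le t\le\min(a,b)$, so the segment has length $\min(a,b)=\langle\hw\hnu,\halpha_1^\vee\rangle$ --- the \emph{smaller} pairing, as the paper notes right after the theorem --- whereas your last paragraph asserts that lowering reaches $t=b$, the larger one.

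For contrast, the paper argues entirely differently. Theorem~\ref{th:main} is first reduced to Theorem~\ref{th:tech} (the normalization $\nu=\rho(\hw\hnu)-k\alpha$ with $0\le k\le\langle\hw\hnu,\halpha_1^\vee\rangle$ and $k\le\langle\hw\hnu,\halpha_2^\vee\rangle$). Theorem~\ref{th:tech} is then proved by GIT on $X=G/B\times\hG/\hB$: the component $C\simeq(\PP^1)^3$ of the fixed locus of $S=(\ker\alpha)^\circ$ through $(w_0B/B,\hw\hB/\hB)$ has semistable points for $\Mi=\Li_{-w_0\nu}\boxtimes\Li_{\hnu}$ precisely because of the triangle inequalities equivalent to the hypotheses; Luna's theorem yields $X^{\rm ss}(\Mi)\cap C\ne\varnothing$, hence a $G$-invariant section of $\Mi^{\otimes n}$ and the asymptotic statement that $V_G(n\nu)$ occurs in $V_\hG(n\hnu)$; finally a Frobenius-reciprocity count shows the relevant $B^-$-eigenspace in $H^0(X_\hv^\circ,\Li_{n\hnu})$ is one-dimensional, and the normality of the orbit closure $X_\hv$ allows extraction of an $n$-th root of the section, descending from $n\nu$ to $\nu$. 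Your outline has no counterpart for this last descent step either, which is the second substantive ingredient of the proof.
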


From each PRV component in $V_\hG(\hnu)$, Theorem~\ref{th:main} gives
$|\Delta_2|$ line segments (that may be of length zero) of irreducible components of $V_\hG(\hnu)$.
Note that the length of $S_{\alpha,\hw,\hnu}$ is
$|\langle
\hw\hnu,\,\halpha_1^\vee\rangle|$.

\bigskip

The proof of Theorem~\ref{th:main} above is the object of Section \ref{sec:proof}. 
In Section~\ref{sec:G2}, Theorem~\ref{th:main} is illustrated with the
example of $G=G_2$ in $\hG=\Spin_7$.

In Section~\ref{sec:ortho}, we get a still larger family of
irreducible $G$-submodules of $\hat V$.
In Section~\ref{sec:sym}, we get a still larger family of
irreducible $G$-submodules of $V_G(\lambda)\otimes V_G(\mu)$. This
last result is specific to the case of the tensor product decomposition. These two
results are illustrated by examples. 
We do not detail here the results of these two sections, whose statements are quiet technical, we refer the reader to Theorems \ref{th:orth}, \ref{th:mainsym} and \ref{th:orthsym}.

\section{Proof of Theorem~\ref{th:main}}\label{sec:proof}

\subsection{A reduction}\label{sec:reduction}

In this section we rewrite Theorem~\ref{th:main} as it was written in the tensor product case in \cite{MPR}. In fact, we prove that the following theorem implies Theorem~\ref{th:main}.

\begin{theo}\label{th:tech}
  Let $\nu$ and $\hnu$ be two dominant weights of $T$ and $\hT$.
Assume that there exist a simple root $\alpha\in\Delta_2$, $\hw$
in the Weyl group $\hW$ and an integer $k$ such that
\begin{eqnarray}
  \label{eq:line}
  \nu=\rho(\hw\hnu)-k\alpha.
\end{eqnarray}

Denote by $\halpha_1$ and $\halpha_2$ the two roots in $\hat{\Delta}$ belonging to the pullback of $\alpha$ by $\rho$.

If  
\begin{subeqnarray}
  \label{eq:1}
  k & \geq & 0,\\
k &\leq & \langle \hw\hnu,\,\halpha_1^\vee\rangle,\\
k & \leq & \langle \hw\hnu,\,\halpha_2^\vee\rangle,
\end{subeqnarray}
 then the irreducible $G$-module $V_G(\nu)$ occurs with multiplicity at
least one in $V_\hG(\hnu)$.
\end{theo}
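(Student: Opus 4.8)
\medskip

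\noindent\emph{Sketch of the intended argument.}
The plan is to express the multiplicity as the dimension of a space of highest weight vectors and to build such a vector from the copy of $\SL_2\times\SL_2$ attached to $\halpha_1,\halpha_2$. The multiplicity of $V_G(\nu)$ in $V_\hG(\hnu)$ is the dimension of the space of vectors of $T$-weight $\nu$ in $V_\hG(\hnu)$ annihilated by $e_\beta$ (a fixed nonzero element of $\lg_\beta$) for every $\beta\in\Delta$, so it is enough to exhibit one such nonzero vector. We may assume \eqref{eq:1} is solvable in $k$; then $\langle\hw\hnu,\halpha_i^\vee\rangle\geq 0$ for $i=1,2$, and, exchanging $\halpha_1,\halpha_2$ if needed, $a:=\langle\hw\hnu,\halpha_1^\vee\rangle\leq b:=\langle\hw\hnu,\halpha_2^\vee\rangle$ with $0\leq k\leq a$. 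If $a=0$ then $\nu=\rho(\hw\hnu)$, a PRV component already handled in \cite{MPR}; so assume $1\leq k\leq a\leq b$.

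\noindent\emph{The local $\SL_2\times\SL_2$ reduction.} Since $\alpha\in\Delta_2$, the orthogonal roots $\halpha_1,\halpha_2$ generate a subgroup $\hat L=\hat L_1\times\hat L_2$ of $\hG$ isogenous to $\SL_2\times\SL_2$ (with $\hat L_i$ attached to $\halpha_i$); by the minimal rank structure (\cite{spherangmin}) the $\SL_2$-subgroup $(\SL_2)_\alpha\subset G$ attached to $\alpha$ sits diagonally in $\hat L$, one has $\alpha^\vee=\halpha_1^\vee+\halpha_2^\vee$, and consequently $e_\alpha=c_1e_{\halpha_1}+c_2e_{\halpha_2}$ with $c_1,c_2\neq0$ (and similarly for $f_\alpha$). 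Suppose we are given a nonzero $p\in V_\hG(\hnu)$ which is (i) of $\hT$-weight $\hw\hnu$ and (ii) annihilated by $\lu$. By (i) and $a,b\geq0$, $p$ is an $\hat L$-highest weight vector of bidegree $(a,b)$; as $\halpha_1\perp\halpha_2$ the operators $f_{\halpha_1},f_{\halpha_2}$ commute, the vectors $f_{\halpha_1}^{\,i}f_{\halpha_2}^{\,j}p$ ($0\le i\le a$, $0\le j\le b$) form a basis of the $\hat L$-submodule generated by $p$ (isomorphic to $V(a)\boxtimes V(b)$), and $f_{\halpha_1}^{\,i}f_{\halpha_2}^{\,j}p$ has $\hT$-weight $\hw\hnu-i\halpha_1-j\halpha_2$. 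Restricting to $(\SL_2)_\alpha$, Clebsch--Gordan gives $V(a)\boxtimes V(b)\big|_{(\SL_2)_\alpha}=\bigoplus_{k=0}^{a}V_{(\SL_2)_\alpha}(a+b-2k)$; since $k\le a=\min(a,b)$ the summand $V_{(\SL_2)_\alpha}(a+b-2k)$ occurs, with highest weight vector $\phi_k=\sum_{i+j=k}c_{ij}\,f_{\halpha_1}^{\,i}f_{\halpha_2}^{\,j}p\neq0$. Then $\phi_k$ has $T$-weight $\rho(\hw\hnu)-k\alpha=\nu$; it is killed by $e_\alpha=c_1e_{\halpha_1}+c_2e_{\halpha_2}$ by construction; and for $\beta\in\Delta\setminus\{\alpha\}$, since $\beta$ is simple $e_\beta$ lies in the span of the $e_{\hat\gamma}$ with $\hat\gamma\in\hat\Delta$, $\rho(\hat\gamma)=\beta$ (\cite{spherangmin}), each such $\hat\gamma$ being distinct from $\halpha_1,\halpha_2$, so the differences $\hat\gamma-\halpha_i$ are not roots of $\hG$ and $[e_\beta,f_{\halpha_1}]=[e_\beta,f_{\halpha_2}]=0$; hence $e_\beta\phi_k=\sum c_{ij}\,f_{\halpha_1}^{\,i}f_{\halpha_2}^{\,j}\,e_\beta p=0$ by (ii). Thus $\phi_k$ is a nonzero $B$-highest weight vector of weight $\nu$ in $V_\hG(\hnu)$, and the theorem follows.

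\noindent\emph{The main obstacle.} What remains, and what I expect to be the real work, is to produce the seed $p$: a $G$-highest weight vector of $T$-weight $\rho(\hw\hnu)$ which is simultaneously an $\hat L$-highest weight vector of bidegree $(a',b')$ with $\min(a',b')\ge k$ (necessarily $a'+b'=\langle\rho(\hw\hnu),\alpha^\vee\rangle=a+b$). The extremal weight vector $v_{\hw\hnu}$ is $\hat L$-highest of bidegree $(a,b)$ but is in general not $\lu$-fixed when $\hw\hnu$ fails to be $G$-dominant, while the $\lu$-fixed vectors of weight $\rho(\hw\hnu)$ furnished by the PRV-component theorem of \cite{MPR} need not be $\hat L$-highest. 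Reconciling the two properties — i.e.\ running the geometric construction of \cite{MPR} (restriction of sections of $\Li_\hnu$ on $\hG/\hB$ to a suitable $G$-stable subvariety adapted to the $\hw$-translate of $\hB\langle U_{\halpha_1},U_{\halpha_2}\rangle\hB/\hB$, with a Frobenius-splitting or cohomology-vanishing input) while keeping track of the $\hat L$-action so that the resulting $G$-highest weight vector stays $\hat L$-highest — is exactly where the inequalities $k\leq\langle\hw\hnu,\halpha_i^\vee\rangle$ (equivalently $k\leq\min(a,b)$) are genuinely used; if no such compatible $p$ can be found directly, one resorts entirely to that geometric realization of the PRV components. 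Granting such a $p$, the elementary $\SL_2$-computation above closes the argument.
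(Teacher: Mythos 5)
Your reduction of the problem to exhibiting one nonzero $\lu$-invariant vector of $T$-weight $\nu$, and the Clebsch--Gordan computation producing $\phi_k$ from a hypothetical seed $p$, are fine as far as they go: the inequalities \eqref{eq:1} are indeed exactly $0\le k\le\min(a,b)$, which is the range of diagonal $\SL_2$-constituents of $V(a)\boxtimes V(b)$. But the proof has a genuine gap, which you yourself flag: the seed $p$ is never constructed, and its existence is essentially the whole content of the theorem. Moreover, the one place where you assert that producing it is automatic --- ``by (i) and $a,b\geq 0$, $p$ is an $\hat L$-highest weight vector'' --- is false: a vector of $\hT$-weight $\hw\hnu$ with $\langle\hw\hnu,\halpha_i^\vee\rangle\geq 0$ need not be killed by $e_{\halpha_1}$ and $e_{\halpha_2}$ (consider the zero-weight line in $V(2)$). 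Being $\lu$-invariant only gives annihilation by the single combination $c_1e_{\halpha_1}+c_2e_{\halpha_2}$, and a $G$-highest weight vector of $T$-weight $\rho(\hw\hnu)$ need not even be concentrated in the single $\hT$-weight $\hw\hnu$. So $\hT$-weight purity, $\lu$-invariance and $\hat L$-extremality are three constraints that your argument has no mechanism to satisfy simultaneously.

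The paper's proof takes a different, geometric route precisely to get around this. It works on $X=G/B\times\hG/\hB$ with $\Mi=\Li_{-w_0\nu}\boxtimes\Li_\hnu$, restricts to the $S$-fixed component $C\simeq(\PP^1)^3$ through $(w_0B/B,\hw\hB/\hB)$ where $S=(\ker\alpha)^\circ$, and observes that the inequalities \eqref{eq:1} are equivalent to the triangle inequalities $a+b\geq c$, $a+c\geq b$, $b+c\geq a$ for the degrees $(a,b,c)=(\langle\nu,\alpha^\vee\rangle,\langle\hw\hnu,\halpha_1^\vee\rangle,\langle\hw\hnu,\halpha_2^\vee\rangle)$, i.e.\ to nonemptiness of the semistable locus of $C$ for $G^S/S\simeq(P)\SL_2$. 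Luna's theorem then yields a $G$-invariant section of $\Mi^{\otimes n}$ not vanishing on $C$, hence a $B^-$-eigensection of $\Li_{n\hnu}$ of weight $-n\nu$ whose restriction to $X_\hv$ (with $\hv=s_{\halpha_1}\hw$) is nonzero; a Frobenius multiplicity-one argument on $H^0(G\hv\hB/\hB,\Li_\hnu)$ together with the normality of $X_\hv$ and the surjectivity of $H^0(\hG/\hB,\Li_\hnu)\to H^0(X_\hv,\Li_\hnu)$ then descends from $n\nu\subset n\hnu$ to $n=1$. To complete your approach you would have to reprove something equivalent to this asymptotic-plus-descent step; the elementary $\SL_2\times\SL_2$ computation alone cannot supply the seed.
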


Before  proving that Theorem~\ref{th:tech} implies
Theorem~\ref{th:main}, we state a basic remark.

\begin{remark}\label{rq:useful}
Let $\alpha\in
\Delta_2$. 
If $S$ is the neutral component of the kernel of $\alpha$, \cite[Lemmas 4.3 and 4.4]{spherangmin} shows that
the inclusion of $G^S$ in $\hG^S$ is isomorphic to the diagonal
inclusion of $(P)SL_2$ in  $(P)SL_2\times (P)SL_2$.
It follows that the image of the one-parameter subgroup
$\halpha^\vee_1+\halpha^\vee_2$ of $\hT$ is
contained in $T$, and that
$\alpha^\vee=\halpha^\vee_1+\halpha^\vee_2$. 

\end{remark}

\begin{proof}[Proof of: Theorem~\ref{th:tech} implies
    Theorem~\ref{th:main}.]

Let $\hnu$ be a dominant weight of $\hG$, $\hw\in\hW$ such that
$\rho(\hw\hnu)$ is a dominant weight of $G$, $\alpha\in\Delta_2$ and
$\nu\in S_{\alpha,\hw,\hnu}\cap X^+(T)$.
  Recall that $\langle
\hw\hnu,\,\halpha_1^\vee\rangle\leq \langle
\hw\hnu,\,\halpha_2^\vee\rangle$.  Two different cases occur
according to the sign of $\langle \hw\hnu,\,\halpha_1^\vee\rangle$. 
\begin{enumerate}
\item If $\langle \hw\hnu,\,\halpha_1^\vee\rangle$ is non-negative, then
  $\langle \hw\hnu,\,\halpha_2^\vee\rangle$ is also non-negative. 
Denoting by $k$ the integer such
  that $\nu=\rho(\hw\hnu)-k\alpha$,  the  definition of
  $S_{\alpha,\hw,\hnu}$ implies that $k$ satisfies  inequalities (\ref{eq:1}).
Then, by Theorem \ref{th:tech}, the irreducible module
  $V_G(\nu)$ occurs in $V_\hG(\hnu)$.

\item Suppose that $\langle \hw\hnu,\,\halpha_1^\vee\rangle$ is
  negative, so that $\langle
  s_{\halpha_1}\hw\hnu,\,\halpha_1^\vee\rangle\geq 0$.

 By  Remark~\ref{rq:useful}, $\langle
\hw\hnu,\,\halpha_1^\vee+\halpha_2^\vee\rangle\geq 0$. Then $\langle
\hw\hnu,\,\halpha_2^\vee\rangle\geq 0$.  Since $\halpha_1$ and
  $\halpha_2$ are orthogonal, we deduce that $\langle
  s_{\halpha_1}\hw\hnu,\,\halpha_2^\vee\rangle\geq 0$.
  
Denote by $k$ the integer such that
$\nu=\rho(s_{\halpha_1}\hw\hnu)-k\alpha$. Then $k$ satisfies
inequalities (\ref{eq:1}) with $s_{\halpha_1}\hw$ instead of $\hw$. Hence the
irreducible module $V_G(\nu)$ occurs in $V_\hG(\hnu)$.
\end{enumerate}
\end{proof}

\begin{remark}
   In the first case, the two end points of the line segment
$S_{\alpha,\hw,\hnu}$ are dominant and both correspond to a PRV
component.
In the second one,   $\rho(s_{\halpha_1}\hw\hnu)$ is not necessarily dominant.
\end{remark}

\subsection{Proof of Theorem~\ref{th:tech}}\label{sec:proof:th:tech}
\subsubsection{Preliminaries}\label{sec:preliminary}

The existence of $k$ satisfying inequalities~\eqref{eq:1} implies that $\langle
\hw\hnu,\,\halpha_1^\vee\rangle$ and
$\langle\hw\hnu,\,\halpha_2^\vee\rangle$ are non-negative. If one of
them is zero, $\nu$ is a PRV component, and in that case,
Theorem~\ref{th:tech} is a consequence of our previous
work~\cite[Theorem~1]{MPR}.
Hence, we assume from now that $\langle
\hw\hnu,\,\halpha_1^\vee\rangle$ and
$\langle\hw\hnu,\,\halpha_2^\vee\rangle$ are positive.   
In particular, by Lemma~\ref{lemma:length} in Appendix, we have  $l(s_{\halpha_1}
\hw)=l(s_{\halpha_2}\hw)=l(\hw)+1$. 

\subsubsection{An asymptotic result}\label{sec:asymptotic}
In this section we apply the Borel-Weil theorem and an argument of
Geometric Invariant Theory to get an asymptotic version of
Theorem~\ref{th:tech}. 

For any character $\nu$ of $B$, $\Li_\nu$ denote the $G$-linearized
line bundle on $G/B$ such that $B$ acts with weight $-\nu$ on the
fiber over $B/B$. Similarly, we define the line bundle $\Li_\hnu$ on
$\hG/\hB$. The Borel-Weil theorem asserts that
$H^0(G/B,\Li_\nu)$ is the irreducible $G$-module $V_G(\nu)^*$.

Set $X=G/B\times \hG/\hB$ and
$\hv=s_{\halpha_1}.\hw$. Let $S$ be the neutral component of the
Kernel of $\alpha$. Let $\rho_S$ denote the restriction map to $S$ both for characters of
$T$ and $\hT$. We denote by $G^S$
(resp. by $\hG^S$) the centralizer of $S$ in $G$ (resp. in $\hG$). 

Let $w_0$ denote the longest element in $W$ and consider the following irreducible component of the $S$-fixed points
set:
$$
C=G^Sw_0 B/B\times \hG^S\hw\hB/\hB.
$$

Note that  $C$ is isomorphic to $(\PP^1)^3$ (see for example \cite[Chapter~9]{Hum}).
Finally consider the line bundle $\Mi=\Li_{-w_0\nu}\boxtimes\Li_{\hnu}$
on $X$. Then, for any
non-negative integer $n$,   
$H^0(X,\Mi^{\otimes n})=V_G(n\nu)\otimes V_\hG(n\hnu)^*$.

The torus $S$ acts trivially on $C$ and hence on the restriction
$\Mi_{|C}$ of $\Mi$ on $C$ by a
character. This character equals  $\rho_S(\nu)+\rho_S(-\hw\hnu)$.
By assumption~\eqref{eq:line}, this character is trivial. In particular,
any point of $C$ is semistable for the action of $S$.

The restriction of $\Mi$ to $C\simeq (\PP^1)^3$ is isomorphic as a line
bundle to 
$\Orb(a)\otimes\Orb(b)\otimes\Orb(c)$ for some integers $a,b,$ and
$c$.
The torus   $T$ acts on the fiber over $w_0 B/B$ in the line bundle
$\Li_{-w_0\nu}$ by the weight $\nu$. 
The $\SL_2$-theory of $\PP^1$ implies that $a=\langle \nu,\,
\alpha^\vee\rangle$.
Similarly we compute $b$ and $c$ and we get
$$
\begin{array}{l}
  a=\langle \nu,\, \alpha^\vee\rangle,\\
b=\langle \hw\hnu,\,\halpha_1^\vee \rangle,\\
c=\langle \hw\hnu,\,\halpha_2^\vee \rangle.
\end{array}
$$

 Then, using Remark \ref{rq:useful}, inequalities~(\ref{eq:1}) are
 equivalent to 
$$
\begin{array}{l}
  a+b\geq c,\\
a+c\geq b,\\
b+c\geq a.
\end{array}
$$ 
They imply that
$C^{\rm ss}(\Mi,G^S/S)$ is not empty.

Since $C^{\rm ss}(\Mi,G^S)\neq\varnothing$,
  Luna's theorem (see \cite[Corollary~2 and Remark~1]{Luna:adh}
or see also \cite[Proposition~8]{GITEigen}) implies that $X^{\rm
  ss}(\Mi)\cap C\neq\varnothing$. 
Fix a positive integer $n$ and a $G$-invariant section $\sigma$ in
$H^0(X,\Mi^{\otimes n})$ such that $\sigma_{|C}$ is not identically
zero.
Let $X_\hv$ be the closure of the orbit $G\hv\hB/\hB$ in $\hG/\hB$.
By the remark of Section \ref{sec:preliminary}, the stabilizer of
$\hv$ in $G^S$ equals $T$. Then 
$G^S.\hv$ has dimension two and $\overline{G^S\hv}=\hG^S\hv$.
Hence
$C\cap(\{w_0 B/B\}\times \hG/\hB)$ is contained in $\{w_0
B/B\}\times X_\hv$.  

Consider now the restriction $\tau$ of $\sigma$ to 
$Y=\{w_0 B/B\}\times \hG/\hB$. Identifying $Y$ with the $\hG/\hB$ by the
second projection, $\Mi_{|Y}$ is a $B^-$-linearized line bundle on
$\hG/\hB$. More precisely, it is the line bundle $\Li_\hnu$ where the
action of $B^-$ is obtained by restricting the action of $\hG$ and
then by twisting by $\nu$.
 But $\tau$ is a $B^-$-invariant section of
$\Mi^{\otimes n}_{|Y}$. Then $\tau$ identifies with a section $\bar\tau$ of $\Li_{n\hnu}$
on $\hG/\hB$ which is a $B^-$-eigenvector of weight $-n\nu$. 
Moreover, the relative position of $C$ and $X_\hv$ implies that
$\bar\tau_{|X_\hv}$ is not identically zero. Note that this implies that
$V_G(n\nu)^*$ appears in $V_\hG(n\hnu)^*$.\\

\subsubsection{Multiplicity one}\label{sec:mult_one}

We introduce a notation: let $H$ be an algebraic group and let $\chi$
be a character of $H$; if $V$ is a representation of $H$, we denote by
$V^{(H)_{\chi}}$ the subspace of $H$-eigenvectors of weight $\chi$. 

Define $X_\hv^\circ=G\hv\hB/\hB$, and recall that
$X_\hv=\overline{X_\hv^\circ}.$
Now, we prove that the multiplicity of $V_G(\nu)^*$ in
$H^0(X^\circ_\hv,\Li_\hnu)$ is one.
We have an injection
$$\iota\ :\ H^0(X^\circ_\hv,\Li_\hnu)\rightarrow \CC[G]$$
and the image $\iota(H^0(X^\circ_\hv,\Li_\hnu))$ equals $\CC[G]^{(\{1\}\times G_\hv)_{\rho(\hv\hnu)}}$. The isotropy subgroup
$G_\hv$ contains the torus T. Hence by using the Frobenius theorem, we
can deduce the following:  
$$
\iota(H^0(X^\circ_\hv,\Li_\hnu))\subset\CC[G]^{(\{1\}\times T)_{\rho(\hv
  \hnu)}}\simeq\bigoplus_{\chi\in X(T)^+}V_G(\chi)^*\otimes
V_G(\chi)^{(T)_{\rho(\hv \hnu)}}.
$$

In particular the multiplicity of $V_G(\nu)^*$ in
$H^0(X^\circ_\hv,\Li_\hnu)$ is at most the dimension of $V_G(\nu)^{(T)_{\rho(\hv \hnu)}}$.
But, by assumption, 
$$
\rho(\hv\hnu)=\rho(\hw\hnu-\langle\hw\hnu,\,\halpha^\vee_1\rangle\halpha^\vee_1)=
\nu-l\alpha,
$$
where $l=\langle\hw\hnu,\,\halpha^\vee_1\rangle-k$. 
Then the dimension of  $V_G(\nu)^{(T)_{\rho(\hv \hnu)}}$ is at most one and the
multiplicity of $V_G(\nu)^*$ in $H^0(X^\circ_\hv,\Li_\hnu)$ is at most
one.

We now show that this multiplicity is one. 
Recall that 
$\alpha^\vee=\halpha^\vee_1+\halpha^\vee_2$ (see Remark \ref{rq:useful}). 
Hence inequalities~\eqref{eq:1} implies that $0\leq l\leq \langle
\nu,\,\alpha^\vee\rangle$.
Therefore the dimension of $V_G(\nu)^{(T)_{\rho(\hv \hnu)}}$ is one.
Let us choose a non-zero element $f$ in the space
$\CC[G]^{(B_-\times T)_{(-\nu,\rho(n\hv\hnu))}}$. The subgroup
$G_\hv$ is a  solvable subgroup
of $G$ containing $T$, so
$G_\hv=G_\hv^u.T$, where $G_\hv^u$ is the  unipotent radical of
$G_\hv$. It is sufficient to prove that
$f$ is $G_\hv^u$-invariant.

Consider the element $\iota(\bar\tau)$: it is a non-zero
element belonging to the one dimensional space:
$\CC[G]^{(B_-\times T_\hv)_{(-n\nu,\rho(n\hv\hnu))}}$. Hence, 
up to multiplying  $f$ by a non-zero
scalar, we may assume that $f^n=\iota(\bar\tau)$. 
We deduce that $f^n$ is $G^u_\hv$-invariant. Hence $f$ is
$G^u_\hv$-invariant because $G^u_\hv$ possesses no non-trivial character and the
algebra $\CC[G]$ is factorial.

\subsubsection{Conclusion}\label{sec:conclusion}
We can now finish the proof of Theorem~\ref{th:tech} by showing that
$V_G(\nu)^*\subset V_\hG(\hnu)^*$. 
Consider the following morphisms: 
$$
\xymatrix{
{\rm H}^0(\hG/\hB,\Li_\hnu)
\ar@{->>}_\varphi[d]\\
{\rm H}^0(X_\hv,\Li_\hnu)\ar@{^{(}->}_\pi[d]\\
{\rm H}^0({X^{\circ}_{\hv_{}}},\Li_{\hnu})\ar@{^{(}->>}_\iota[d]\\
\CC[G]\\
}
$$

The morphisms $\pi$ and $\iota$ are defined above and $\varphi$ is the
restriction morphism.
By \cite[Corollary~8]{Br:GammaGH}, the map $\varphi$ is surjective.
Since $ H^0(\hG/\hB,\Li_\hnu)$ is isomorphic to $V_\hG(\hnu)^*$, it is
sufficient to prove that $V_G(\nu)^*\subset H^0(X_\hv,\Li_\hnu)$.

In Section \ref{sec:mult_one}, we showed that there exists a
non-zero  $B^-$-equivariant
section $\kappa$  in $H^0(X_\hv^\circ,\Li_\hnu)$ of weight $-\nu$.
It remains to prove that $\kappa$ extends to a regular section on
$X_\hv$ to conclude the proof.
Since the space $H^0(X^\circ_\hv,\Li_{n\hnu})^{(B^-)_{n\nu}}$ is one dimensional,
we may assume that $\kappa^{\otimes n}=\bar\tau$, where $\bar\tau$ is
the section obtained in \ref{sec:asymptotic}, so $\kappa^{\otimes n}$
extends to a regular section on $X_\hv$.
The normality of $X_\hv$ (see \cite[Theorem~1]{Br:multfree}) implies
that  $\kappa$ extends to a regular section on $X_\hv$.

\begin{remark}
In general the multiplicity of $V_G(\nu)$ in
$V_\hG(\hnu)$ does not equal one; but, as a consequence of the proof,
the multiplicity of $V_G(\nu)^\ast$ in 
${\rm H}^0(X_\hv,\Li_\hnu)$ is one.
\end{remark}

\subsection{The example $G_2\subset{\rm Spin}_7$}
\label{sec:G2}

In this section $\hG={\rm Spin}_7$ and $G=G_2$. 
The first fundamental representation of $G_2$ has dimension 7 and
induces an embedding of $G_2$ in ${\rm SO}_7$. Since $G_2$ is simply
connected this embedding can be raised to $\hG={\rm Spin}_7$.
We use the numeration of Bourbaki \cite{Bou}. 
Denote by $\hs_1$, $\hs_2$ and $\hs_3$ the simple reflections of
$\hW$. 
The fundamental weights of $\hG$ and $G$ are denoted by
$\hat{\varpi_i}$ for $i=1,\,2,\,3$ and by $\varpi_i$ for $i=1,\,2$.
Here $\rho$ is characterized by: 
$\rho(\hat{\varpi_1})=\varpi_1$, $\rho(\hat{\varpi_2})=\varpi_2$ and
$\rho(\hat{\varpi_3})=\varpi_1$.
We use the basis of fundamental weights for $\hG$ and $G$ to express the weights; in
particular $\hnu=(\hnu_1,\hnu_2,\hnu_3)$ means $\hnu=\hnu_1\varpi_1+\hnu_2\varpi_2+\hnu_3\varpi_3$.

We begin by giving the  PRV components of any irreducible representation of $\hG$.

\begin{prop}
Let $\hnu=(\hnu_1,\hnu_2,\hnu_3)$ be a dominant weight of $\hG$ (ie
$\hnu_i\geq 0$ for any $i$).
Then the dominant weights of $G$, obtained as the restriction $\rho(\hw\hnu)$ of an element of the orbit $\hW.\hnu$, are
\begin{itemize}
\item $\nu^1=(\hnu_1+\hnu_3,\hnu_2)$;
\item $\nu^2=(-\hnu_1+\hnu_3,\hnu_1+\hnu_2)$ if $\hnu_1\leq\hnu_3$\\
or $\nu^2=(\hnu_1-\hnu_3,\hnu_2+\hnu_3)$ if $\hnu_1\geq\hnu_3$;
\item $\nu^3=(-\hnu_1+\hnu_2+\hnu_3,\hnu_1)$ if $\hnu_1\leq\hnu_2+\hnu_3$\\
or $\nu^3=(\hnu_1-\hnu_2-\hnu_3,\hnu_2+\hnu_3)$ if $\hnu_1\geq\hnu_2+\hnu_3$;
\item $\nu^4=(-\hnu_1+\hnu_2,\hnu_1)$ if $\hnu_1\leq\hnu_2$\\
or $\nu^4=(\hnu_1-\hnu_2,\hnu_2)$ if $\hnu_1\geq\hnu_2$.
\end{itemize}
\end{prop}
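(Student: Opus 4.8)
The plan is to reduce the statement to a finite computation in the Weyl groups. Since the PRV components of $V_\hG(\hnu)$ are parametrized by $W\Qleft\hW$, and here $|\hW|=|W(B_3)|=48$ while $|W|=|W(G_2)|=12$, there are exactly four of them, which is why the statement produces four weights $\nu^1,\dots,\nu^4$. First I would fix the usual coordinates $\epsilon_1,\epsilon_2,\epsilon_3$ on the weight lattice of $\Spin_7$, in which $\hat\varpi_1=\epsilon_1$, $\hat\varpi_2=\epsilon_1+\epsilon_2$, $\hat\varpi_3=\frac12(\epsilon_1+\epsilon_2+\epsilon_3)$, so that $\hnu=a\epsilon_1+b\epsilon_2+c\epsilon_3$ with $a=\hnu_1+\hnu_2+\frac12\hnu_3\geq b=\hnu_2+\frac12\hnu_3\geq c=\frac12\hnu_3\geq 0$, and $\hW$ acts by signed permutations of $(\epsilon_1,\epsilon_2,\epsilon_3)$: $\hat{s}_1$ exchanges $\epsilon_1,\epsilon_2$, $\hat{s}_2$ exchanges $\epsilon_2,\epsilon_3$, and $\hat{s}_3$ changes the sign of $\epsilon_3$. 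From the prescribed values of $\rho$ on the $\hat\varpi_i$ one computes $\rho(\epsilon_1)=\varpi_1$, $\rho(\epsilon_2)=\varpi_2-\varpi_1$, $\rho(\epsilon_3)=2\varpi_1-\varpi_2$, hence $\rho(p\epsilon_1+q\epsilon_2+r\epsilon_3)=(p-q+2r)\varpi_1+(q-r)\varpi_2$.

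Next I would take the explicit transversal $\{e,\ \hat{s}_1,\ \hat{s}_1\hat{s}_2,\ \hat{s}_1\hat{s}_2\hat{s}_3\}$ of $W\Qleft\hW$: its elements send $\hnu$ to $a\epsilon_1+b\epsilon_2+c\epsilon_3$, $b\epsilon_1+a\epsilon_2+c\epsilon_3$, $c\epsilon_1+a\epsilon_2+b\epsilon_3$ and $-c\epsilon_1+a\epsilon_2+b\epsilon_3$ respectively, and restricting via the formula for $\rho$ above gives $\rho(\hnu)=(\hnu_1+\hnu_3)\varpi_1+\hnu_2\varpi_2$, $\rho(\hat{s}_1\hnu)=(-\hnu_1+\hnu_3)\varpi_1+(\hnu_1+\hnu_2)\varpi_2$, $\rho(\hat{s}_1\hat{s}_2\hnu)=(-\hnu_1+\hnu_2+\hnu_3)\varpi_1+\hnu_1\varpi_2$ and $\rho(\hat{s}_1\hat{s}_2\hat{s}_3\hnu)=(-\hnu_1+\hnu_2)\varpi_1+\hnu_1\varpi_2$. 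For generic dominant $\hnu$ these are four distinct weights of $T$; since $\overline{\rho(\hw\hnu)}$ depends only on the coset $W\hw$ (the observation recalled in the introduction), it follows that $We,\,W\hat{s}_1,\,W\hat{s}_1\hat{s}_2,\,W\hat{s}_1\hat{s}_2\hat{s}_3$ are four distinct cosets, hence all of $W\Qleft\hW$, so the four restrictions above realize all PRV components once replaced by their dominant conjugates. To finish, in each of the four expressions the coefficient of $\varpi_2$ is manifestly $\geq 0$, while the coefficient of $\varpi_1$ may be negative; when it is, applying $s_{\alpha_1}$ (which by $s_{\alpha_1}\varpi_1=-\varpi_1+\varpi_2$, $s_{\alpha_1}\varpi_2=\varpi_2$ sends $x\varpi_1+y\varpi_2$ to $-x\varpi_1+(x+y)\varpi_2$) yields a dominant weight. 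This gives the second formulas for $\nu^2,\nu^3,\nu^4$ in the ranges $\hnu_1\geq\hnu_3$, $\hnu_1\geq\hnu_2+\hnu_3$, $\hnu_1\geq\hnu_2$ respectively, and no case split for $\nu^1$ since $\hnu_1+\hnu_3\geq 0$ always.

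The bulk of this is the routine signed-permutation bookkeeping. The main point needing attention is justifying that $\{e,\hat{s}_1,\hat{s}_1\hat{s}_2,\hat{s}_1\hat{s}_2\hat{s}_3\}$ really represents the four cosets: I would argue as above via the genericity of $\hnu$, or else realize $W$ inside $\hW$ explicitly as $\langle\hat{s}_1\hat{s}_3,\hat{s}_2\rangle$ --- using that here $\alpha_1\in\Delta_2$ with orthogonal preimages $\halpha_1,\halpha_3$, so $s_{\alpha_1}$ is the restriction of $\hat{s}_1\hat{s}_3$ and $s_{\alpha_2}$ that of $\hat{s}_2$ --- and check directly that the four elements lie in distinct right cosets. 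One should also recall that $\rho$ is $W$-equivariant, so that passing to the dominant conjugate commutes with $\rho$; and note that the two formulas in each of $\nu^2,\nu^3,\nu^4$ agree on the wall separating the two cases, as they must. As a sanity check it would be prudent to test the formulas on, say, $\hnu=\hat\varpi_3$ against the known branching of the spin representation of $\Spin_7$ to $G_2$.
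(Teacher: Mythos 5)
Your proof is correct and follows essentially the same route as the paper's: an explicit computation of $\rho(\hw\hnu)$ for representatives of the four cosets in $W\Qleft\hW$, combined with the count $|\hW/W|=4$ to see that nothing else occurs. The only cosmetic differences are that you work in $\epsilon$-coordinates and reach the dominant chamber by a single conjugation by $s_{\alpha_1}\in W$, whereas the paper chooses case-dependent representatives (e.g.\ $\hs_1$ versus $\hs_3$) so that $\rho(\hw\hnu)$ is already dominant; your extra care in checking that the four representatives lie in distinct cosets is a point the paper leaves implicit.
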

It may happen of course that some of the four  PRV components above are the same.

\begin{proof}
First remark that, since $|\hW/W|=4$ we have at most 4  PRV
components. The weights $\nu^i$, for $i=1,\cdots,4$ in the statement of the proposition
are  dominant. We have to prove that they equal $\rho(\hat
w\hat\nu)$, for some $\hat w$. This is checked by:
\begin{itemize}
\item $\nu^1=\rho(\hnu)$;
\item $\nu^2=\rho(\hs_1\hnu)=\rho(-\hnu_1,\hnu_1+\hnu_2,\hnu_3)$ \\
or $\nu^2=\rho(\hs_3\hnu)=\rho(\hnu_1,\hnu_2+\hnu_3,-\hnu_3)$;
\item $\nu^3=\rho(\hs_1\hs_2\hnu)=\rho(-\hnu_1-\hnu_2,\hnu_1,2\hnu_2+\hnu_3)$\\
or $\nu^3=\rho(\hs_3\hs_2\hnu)=\rho(\hnu_1+\hnu_2,\hnu_2+\hnu_3,-2\hnu_2-\hnu_3)$;
\item $\nu^4=\rho(\hs_1\hs_2\hs_3\hnu)=\rho(-\hnu_1-\hnu_2-\hnu_3,\hnu_1,2\hnu_2+\hnu_3)$\\
or $\nu^4=\rho(\hs_3\hs_2\hs_3\hnu)=\rho(\hnu_1+\hnu_2+\hnu_3,\hnu_2,-2\hnu_2-\hnu_3)$.
\end{itemize}
\end{proof}

\bigskip
 The set $\Delta_2$ is reduced to the short root $\alpha_1$.
Then Theorem~\ref{th:main} gives  line segments parallel to $\alpha_1$ and having (at least) one PRV component as end point. 

\begin{coro}\label{cor:G2}
The irreducible representation $V_{G_2}(\nu)$ occurs with multiplicity
at least one in $V_{{\rm Spin}_7}(\hnu)$ for all dominant weights $\nu$
contained in one of the three following line segments (distinct in general): 
\begin{itemize}
\item the line segment whose end points are $\nu^1$ and $\nu^2$, of length $\operatorname{min}(\hnu_1,\hnu_3)$;
\item the line segment whose end points are $\nu^3$ and
  $\nu^{3'}=(\hnu_1+3\hnu_2+\hnu_3,-\hnu_2)$, of length $\operatorname{min}(\hnu_1+\hnu_2,2\hnu_2+\hnu_3)$;
\item the line segment whose end points are $\nu^4$ and
  $\nu^{4'}=(\hnu_1+3\hnu_2+2\hnu_3,-\hnu_2-\hnu_3)$, of length $\operatorname{min}(\hnu_1+\hnu_2+\hnu_3,2\hnu_2+\hnu_3)$. 
 \end{itemize}
\end{coro}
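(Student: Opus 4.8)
The plan is to derive Corollary~\ref{cor:G2} directly from Theorem~\ref{th:main} applied to the single element $\alpha=\alpha_1\in\Delta_2$, using the explicit description of $\rho$ and of the simple reflections of $\hW=\hW(\Spin_7)$. First I would fix, for $\alpha_1$, the two simple roots $\halpha_1,\halpha_2\in\hat\Delta$ with $\rho(\halpha_i)=\alpha_1$; in the Bourbaki numbering for $B_3$ these are the short-root partners of the long root $\alpha_1$ of $G_2$, namely $\hat\alpha_1$ and $\hat\alpha_3$ (the short simple root of $B_3$). For each of the four PRV elements $\hw$ produced in the Proposition (i.e. $\hw\in\{e,\hs_1,\hs_1\hs_2,\hs_1\hs_2\hs_3\}$ and their $\hs_3$-variants), I would compute the two pairings $\langle\hw\hnu,\halpha_1^\vee\rangle$ and $\langle\hw\hnu,\halpha_3^\vee\rangle$, order them so the smaller is $\halpha_1^\vee$, and then read off the line segment $S_{\alpha_1,\hw,\hnu}$ with endpoints $\rho(\hw\hnu)$ and $\rho(s_{\halpha_j}\hw\hnu)$, together with its length $|\langle\hw\hnu,\halpha_1^\vee\rangle|$ as recorded after Theorem~\ref{th:main}.

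The computation for the first segment: starting from $\hw=e$ (giving the PRV component $\nu^1$), the relevant pairings of $\hnu=(\hnu_1,\hnu_2,\hnu_3)$ against $\halpha_1^\vee$ and $\halpha_3^\vee$ are essentially $\hnu_1$ and $\hnu_3$ (reflecting that $\rho(\hat\varpi_1)=\rho(\hat\varpi_3)=\varpi_1$), so the shorter one is $\min(\hnu_1,\hnu_3)$; reflecting in the corresponding $s_{\halpha_j}$ and applying $\rho$ lands on $\nu^2$, matching the stated endpoints and length. For the second and third segments I would instead take $\hw=\hs_1\hs_2$ and $\hw=\hs_1\hs_2\hs_3$ (the PRV elements giving $\nu^3$ and $\nu^4$): here one of the two relevant pairings is negative in general, so we are in case~(ii) of the reduction argument; the segment endpoint $\rho(s_{\halpha_j}\hw\hnu)$ need not be dominant, which is exactly why the ``reflected'' endpoints $\nu^{3'}=(\hnu_1+3\hnu_2+\hnu_3,-\hnu_2)$ and $\nu^{4'}=(\hnu_1+3\hnu_2+2\hnu_3,-\hnu_2-\hnu_3)$ have a negative second coordinate. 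I would verify these two endpoints by the explicit matrix action of the $\hs_i$ on $\hnu$ (as already tabulated in the proof of the Proposition) followed by $\rho$, and check that the lengths come out as $\min(\hnu_1+\hnu_2,2\hnu_2+\hnu_3)$ and $\min(\hnu_1+\hnu_2+\hnu_3,2\hnu_2+\hnu_3)$ respectively. Finally, Theorem~\ref{th:main} guarantees $V_{G_2}(\nu)$ occurs in $V_{\Spin_7}(\hnu)$ for every dominant $\nu$ in $S_{\alpha_1,\hw,\hnu}$, which is the assertion.

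The main obstacle I expect is bookkeeping: correctly computing $\langle\hw\hnu,\halpha_i^\vee\rangle$ for the various $\hw$ requires pinning down the Cartan pairing between the $B_3$-weight lattice (written in fundamental weights $\hat\varpi_i$) and the coroots $\halpha_1^\vee,\halpha_3^\vee$, and then tracking how $s_{\halpha_1}$ or $s_{\halpha_3}$ (note $\halpha_3^\vee$ is a \emph{long} coroot of $B_3$, corresponding to the short simple root) acts; a sign error or a confusion between $\halpha_1$ and $\halpha_3$ would corrupt both the endpoint and the length. One also needs to double-check, in each of the three cases, which of $\halpha_1,\halpha_3$ realizes the smaller pairing so as to identify $j$ correctly; but since the two are orthogonal and the $\hs_3$-variant of each PRV element gives the same restriction $\rho(\hw\hnu)$, the resulting line segment is independent of that choice, and the final lengths $\min(\cdots,\cdots)$ absorb the case distinction. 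Everything else is a routine substitution into the formulas of the Proposition and of Theorem~\ref{th:main}.
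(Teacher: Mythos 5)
Your proposal is correct and is essentially the argument the paper intends: the corollary is a direct application of Theorem~\ref{th:main} with $\alpha=\alpha_1$ (the unique element of $\Delta_2$), whose preimage under $\rho$ is $\{\hat\alpha_1,\hat\alpha_3\}$, combined with the explicit computations of $\hw\hnu$ already tabulated in the proof of the Proposition; the endpoints $\nu^{3'},\nu^{4'}$ and the three lengths come out exactly as you describe, with the $\min$ absorbing the choice between the two representatives $\hw$ (e.g.\ $\hs_1\hs_2$ versus $\hs_3\hs_2$) according to which one makes $\rho(\hw\hnu)$ dominant. Only two cosmetic slips: $\alpha_1$ is the \emph{short} simple root of $G_2$ (as the paper states), and the two $B_3$-preimages $\hat\alpha_1,\hat\alpha_3$ are a long and a short root respectively; neither affects your computation.
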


Remark that $\nu^{3'}$ and $\nu^{4'}$ are not dominant in general and
are never strictly dominant. In particular the line segments with end points $\nu^3$ and $\nu^4$ intersect the wall of the dominant chamber corresponding to $\alpha_2$.

We illustrate this result in Figure~\ref{figure:G2} where $\hnu=(2,2,1)$.

\begin{figure}[h]
\begin{center}
\input{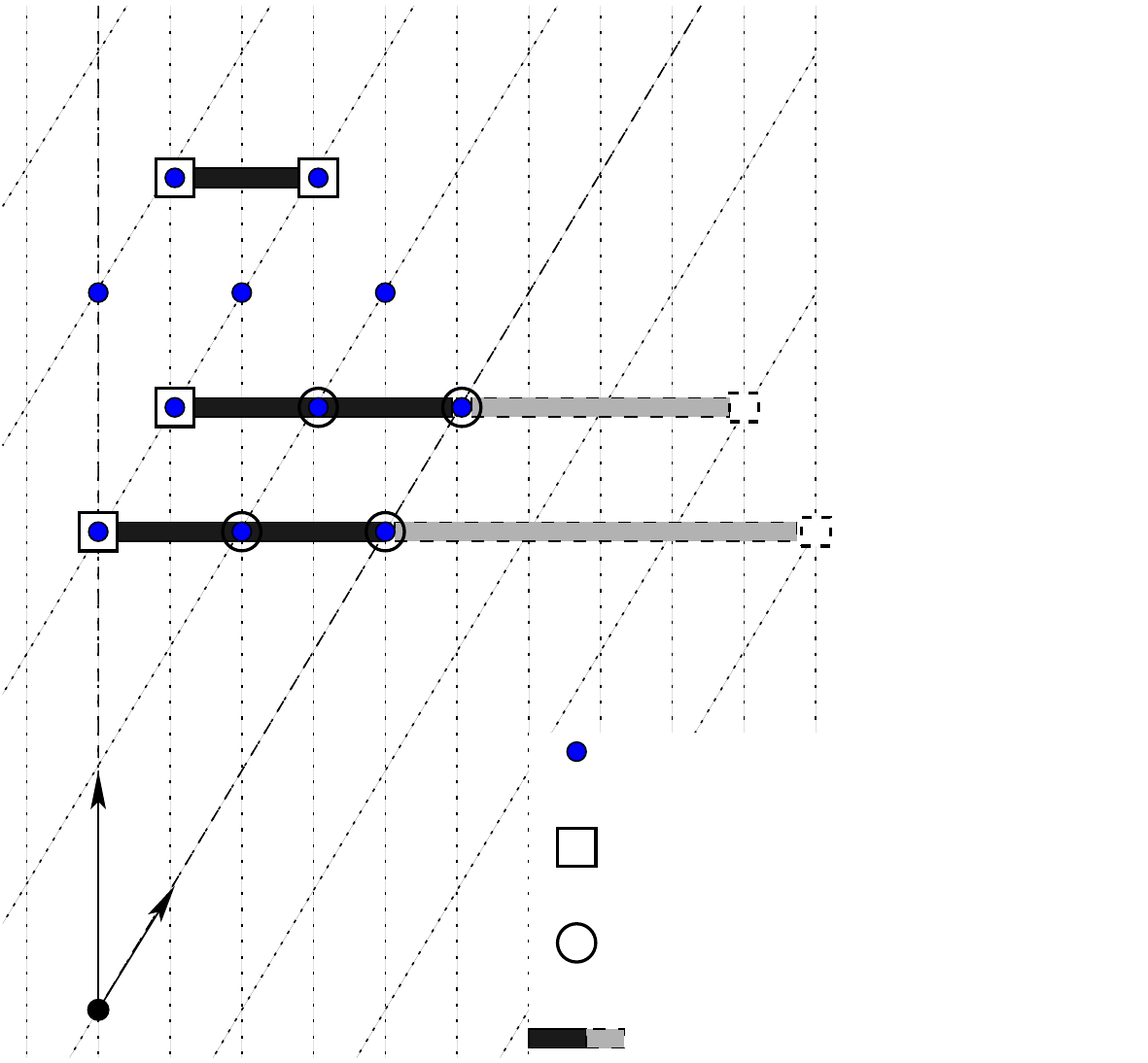_t}
\caption{Illustration of Corollary \ref{cor:G2}}
\label{figure:G2}
\end{center}
\end{figure}

\begin{remark}
In that example, note that $\rho(\hnu)-\alpha_2$ is not a highest weight of a $G$-submodule of $V_\hG(\hnu)$. Nevertheless it is dominant and we have $\langle \hnu,\halpha_2\rangle=2$. Hence we cannot expect to extend our result to the roots $\alpha$ in $\Delta_1$.
\end{remark}

\section{A generalization of Theorem \ref{th:main}}
\label{sec:ortho}
In this section, we come back to the general situation of connected
reductive groups $G\subset \hG$ such that $\hG/G$ is spherical of  minimal rank. 

To state the result, we define the basic notion of hyperrectangle generated by a finite
set of pairwise orthogonal line segments having one end point in common.
For $1\leq k\leq s$, let
$I_k=[a,b_k]$ be $k$ pairwise orthogonal line segments in an affine space. The
hyperrectangle generated by the family $(I_k)_{1\leq k\leq s}$ is
$$R=\{a+\sum_{k=1}^s\lambda_k\overrightarrow{ab_k}\ |\ \lambda_k\in[0,1]\}.$$

We improve Theorem \ref{th:main}, by replacing the line segments $S_{\alpha,\hw,\hnu}$
by hyperrectangles containing them.

\begin{theo}\label{th:orth}
Let $\hnu$ be a dominant weight of $\hG$ and let $\hw\in\hW$ such that
$\rho(\hw\hnu)$ is a dominant weight of $G$. 
Let $\alpha^1,\dots,\alpha^s$ be $s$ pairwise orthogonal simple roots of $G$ in
$\Delta_2$ (with $s\geq 1$). Denote by
$R_{\alpha^1,\dots,\alpha^s,\hw,\hnu}$ the hyperrectangle generated by
the $s$ pairwise orthogonal line segments
$(S_{\alpha^i,\hw,\hnu})_{i\in\{1,\dots,s\}}$ (that have
$\rho(\hw\hnu)$ as common end point).
 
Then, for any $\nu\in R_{\alpha^1,\dots,\alpha^s,\hw,\hnu}\cap
X^+(T)$, the irreducible $G$-module $V_G(\nu)$ occurs with
multiplicity at least one in $V_\hG(\hnu)$.
\end{theo}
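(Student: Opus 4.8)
The plan is to mimic the proof of Theorem~\ref{th:tech}, but run the Geometric Invariant Theory argument over a larger torus. First I would set up the reduction exactly as in Section~\ref{sec:reduction}: given $\nu\in R_{\alpha^1,\dots,\alpha^s,\hw,\hnu}\cap X^+(T)$, write
$$
\nu=\rho(\hw\hnu)-\sum_{i=1}^s k_i\alpha^i,
$$
and, by applying the simple reflections $s_{\halpha^i_1}$ for those $i$ with $\langle\hw\hnu,(\alpha^i_1)^\vee\rangle<0$ (these reflections commute, since the $\alpha^i$ are pairwise orthogonal, hence so are all the $\halpha^i_j$), replace $\hw$ by an element $\hw'$ such that $\langle\hw'\hnu,(\halpha^i_1)^\vee\rangle$ and $\langle\hw'\hnu,(\halpha^i_2)^\vee\rangle$ are both $\geq 0$ for every $i$, and $0\le k_i\le\langle\hw'\hnu,(\halpha^i_1)^\vee\rangle=\min_j\langle\hw'\hnu,(\halpha^i_j)^\vee\rangle$. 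As in Section~\ref{sec:preliminary}, one may assume all these pairings are strictly positive, the remaining cases being covered by Theorem~\ref{th:main} applied with a subfamily.

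Next I would redo the asymptotic step of Section~\ref{sec:asymptotic} with the subtorus $S'\subset T$ defined as the neutral component of $\bigcap_{i=1}^s\ker\alpha^i$, of codimension $s$ in $T$. Put $X=G/B\times\hG/\hB$, $\hv=(\prod_i s_{\halpha^i_1})\hw'$, and let $C$ be the $S'$-fixed component $G^{S'}w_0B/B\times\hG^{S'}\hw'\hB/\hB$; by the minimal-rank structure theory (Remark~\ref{rq:useful} applied to each $\alpha^i$) the inclusion $G^{S'}\subset\hG^{S'}$ is the diagonal $(P)SL_2^s\hookrightarrow(P)SL_2^s\times(P)SL_2^s$, so $C\cong(\PP^1)^{3s}$ and $(\alpha^i)^\vee=(\halpha^i_1)^\vee+(\halpha^i_2)^\vee$ for each $i$. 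The line bundle $\Mi=\Li_{-w_0\nu}\boxtimes\Li_\hnu$ restricts to $\bigotimes_{i=1}^s\bigl(\Orb(a_i)\boxtimes\Orb(b_i)\boxtimes\Orb(c_i)\bigr)$ with $a_i=\langle\nu,(\alpha^i)^\vee\rangle$, $b_i=\langle\hw'\hnu,(\halpha^i_1)^\vee\rangle$, $c_i=\langle\hw'\hnu,(\halpha^i_2)^\vee\rangle$; the $S'$-weight on $\Mi_{|C}$ is trivial by~\eqref{eq:line}. The inequalities $0\le k_i\le\min(b_i,c_i)$ translate, via $a_i=\langle\nu,(\alpha^i)^\vee\rangle=b_i+c_i-2k_i$, into the three triangle inequalities $a_i+b_i\ge c_i$, $a_i+c_i\ge b_i$, $b_i+c_i\ge a_i$ for each $i$ separately, which for the diagonal $SL_2^s$ acting on $(\PP^1)^{3s}$ is exactly the condition that $C^{\rm ss}(\Mi,G^{S'}/S')\ne\varnothing$ (the GIT stability set of a product is the product of stability sets). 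Luna's slice/adherence theorem then gives $X^{\rm ss}(\Mi)\cap C\ne\varnothing$, and choosing a $G$-invariant section $\sigma$ of $\Mi^{\otimes n}$ nonvanishing on $C$ produces, by restriction to $\{w_0B/B\}\times\hG/\hB$ and the relative-position argument with $X_\hv=\overline{G\hv\hB/\hB}$ (here $\dim G^{S'}\hv=2s$ and $\overline{G^{S'}\hv}=\hG^{S'}\hv$), a nonzero $B^-$-eigensection $\bar\tau$ of $\Li_{n\hnu}$ on $\hG/\hB$ of weight $-n\nu$ whose restriction to $X_\hv$ is nonzero; hence $V_G(n\nu)^*\subset V_\hG(n\hnu)^*$.

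Finally I would transfer this asymptotic statement to $n=1$ exactly as in Sections~\ref{sec:mult_one} and~\ref{sec:conclusion}. The point is that $G_\hv$ is solvable, contains $T$, and $\rho(\hv\hnu)=\nu-\bigl(\sum_i l_i\alpha^i\bigr)$ with $l_i=\langle\hw'\hnu,(\halpha^i_1)^\vee\rangle-k_i$ satisfying $0\le l_i\le\langle\nu,(\alpha^i)^\vee\rangle$; this bounds $\dim V_G(\nu)^{(T)_{\rho(\hv\hnu)}}$ by one (an extremal-weight-type estimate in the $SL_2^s$-directions), giving that $V_G(\nu)^*$ occurs with multiplicity at most one in $H^0(X_\hv^\circ,\Li_\hnu)=\CC[G]^{(\{1\}\times G_\hv)_{\rho(\hv\hnu)}}$. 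Then a nonzero $(B^-\times T)$-eigenvector $f$ of the right weight in $\CC[G]$ satisfies $f^n=\iota(\bar\tau)$ up to scalar, hence is $G^u_\hv$-invariant since $\CC[G]$ is factorial and $G^u_\hv$ has no nontrivial character; so $V_G(\nu)^*\subset H^0(X_\hv^\circ,\Li_\hnu)$, it extends to $X_\hv$ by normality of $X_\hv$ (Brion), and pushes forward to $V_\hG(\hnu)^*$ by surjectivity of the restriction $H^0(\hG/\hB,\Li_\hnu)\twoheadrightarrow H^0(X_\hv,\Li_\hnu)$. The main obstacle, and the only genuinely new input beyond bookkeeping, is verifying that the GIT semistability computation on $C\cong(\PP^1)^{3s}$ decouples into $s$ independent triangle-inequality conditions — i.e. that semistability for the diagonal $SL_2^s$ is the conjunction of the $s$ factorwise semistability conditions — and that Luna's theorem applies verbatim with the higher-dimensional torus $S'$; everything else is a direct, if notationally heavier, rerun of Section~\ref{sec:proof:th:tech}.
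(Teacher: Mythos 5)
Your proposal is correct and follows essentially the same route as the paper: the paper's own proof of Theorem~\ref{th:orth} is exactly this rerun of Section~\ref{sec:proof:th:tech} with $S=\bigcap_i(\ker\alpha^i)^\circ$, $\hv=\prod_i s_{\halpha^i_1}\hw$, $C\cong((\PP^1)^3)^s$, the factorwise triangle inequalities giving $C^{\rm ss}(\Mi,G^S)\neq\varnothing$, and the multiplicity-one bound coming from orthogonality of the $\alpha^i$. The two points you flag as needing care (the decoupling of semistability over the product $(P)SL_2^s$ and the applicability of Luna's theorem for the larger torus) are precisely the steps the paper asserts without further comment, and your justifications of them are sound; the only nitpick is that when some $\langle\hw'\hnu,(\halpha^i_1)^\vee\rangle=0$ the degenerate case is handled by induction on $s$ (the theorem itself for a smaller subfamily) rather than by Theorem~\ref{th:main}, which is the case $s=1$.
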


\begin{proof}
The proof of this theorem is very similar to the proof of Theorem
\ref{th:main}, so that we will only explain the main changes. 
First by a similar reasoning to that of Section \ref{sec:reduction},
the proof reduce to the following: if
$\nu=\rho(\hw\hnu)+\sum_{i=1}^sk^i\alpha^i$, where the integers $k^i$
satisfy three inequalities similar to inequalities~\eqref{eq:1}, then $V_G(\nu)$ occurs in $V_\hG(\hnu)$. 

For each $i\in\{1,\dots,s\}$, there exist two simple roots $\halpha^i_1$
and $\halpha^i_2$ such that
$\rho(\halpha^i_1)=\rho(\halpha^i_2)=\alpha^i$. Index them in such a way that, for any $i\in\{1,\dots,s\}$,
$$\l\hw\hnu,(\halpha^{i}_{1})^{\vee}\r\leq \l\hw\hnu,(\halpha_2^i)^\vee\r.$$  
Arguing like in Section~\ref{sec:proof:th:tech}, we may assume that for
$i\in\{1,\dots,s\}$, 
$$\l\hw\hnu,({\halpha^i_1})^\vee\r>0,$$
and hence that
$l(s_{\halpha_1^i}\hw)=l(\hw)+1$. 

Set $\hv=s_{\halpha_1^1}\ldots s_{\halpha_1^s}\hw$. Note that the
simple reflexions
$s_{\halpha_1^i}$ commute. Set $X=G/B\times \hG/\hB$, $S=\cap_{i=1}^s(\ker
\alpha^i)^\circ\subset T$ and let $C$ be the irreducible component of
$X^S$ containing $(w_0B/B,\hat vB/B)$. Observe that $G^S/S$ is the product of $s$ groups isomorphic to $\SL_2$ or ${\rm P\SL_2}$. Note also that $C$ is isomorphic to $((\PP^1)^3)^s$, with the natural action of $G^S/S$. In
particular, the inequalities satisfied by the integers $k^i$ implies that
$C^{ss}(\Mi_{|C},G^S)\neq\varnothing$. Then $X^{ss}(\Mi)\cap
C\neq\varnothing$. We deduce that there exists a positive integer $n$ and a
$G$-section $\overline\tau$ of $\Li_{n\hnu}$ which is a eigenvector of
weight $-n\nu$ for $B^-$ and whose restriction to $X_\hv$ is not
identically zero.

Since the simple roots $\alpha^i_1$ are pairwise orthogonal, for any
$0\leq l^i\leq \l\nu,\alpha^i\r$, the weight $\nu-\sum_i l^i\alpha^i$
of $T$ occurs in $V_H(\nu)$ with multiplicity exactly one. We deduce
(see Section~\ref{sec:asymptotic}) that $\dim
H^\circ(X_\hnu^\circ,\Li_\hnu)^{(T)_{\nu-\sum_i l^i\alpha^i}}=1$. We
conclude as in Section~\ref{sec:conclusion}. 
\end{proof} 

We illustrate Theorem \ref{th:orth} by the following example.

\begin{ex}\label{ex:orth}
Let $G=\operatorname{Sp}_6$ diagonally embedded in
$\hG=\operatorname{Sp}_6\times\operatorname{Sp}_6$. In that case, the
first and the third simple roots of $\operatorname{Sp}_6$ are orthogonal.
Consider the dominant weight $\hnu=((2,0,4),(2,0,2))$ in the basis of
fundamental weights. 
The irreducible components $V_G(\nu)$ of the tensor
product $V_\hG(\hnu)$, obtained by Theorem \ref{th:orth},  are
represented in Figure~\ref{fig:Sp6-1}. On this figure, the standard
basis is 
the  basis denoted by $(\epsilon_1,\epsilon_2,\epsilon_3)$ in Bourbaki.

\begin{figure}[!h]
 \centering
\begin{tikzpicture}
\node[%
minimum height=10.5cm,
minimum width=0.9\textwidth] (Frame) {\null};

\node[below] (image) at (Frame.north) {\includegraphics[height=8cm]{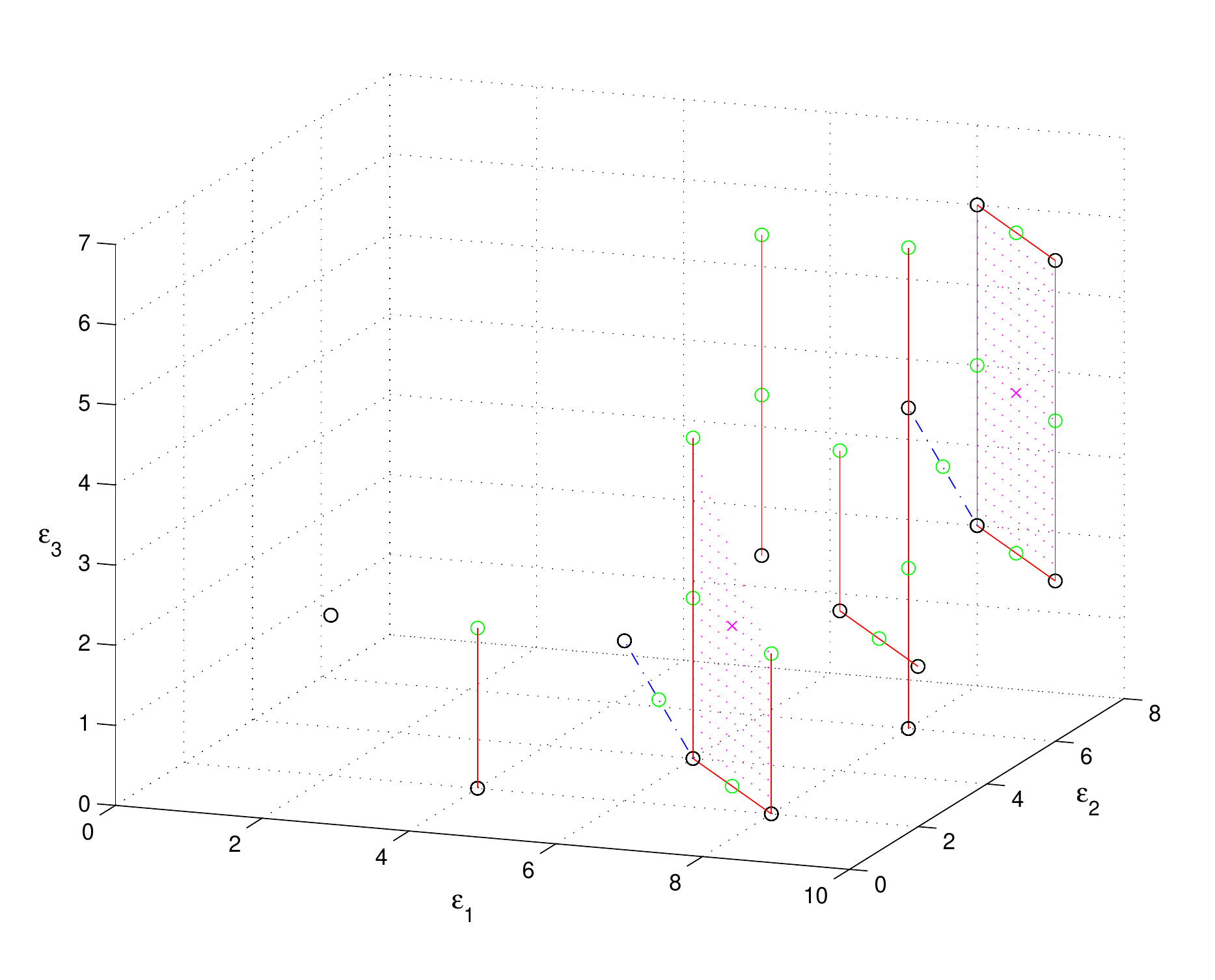}};
\node[draw,above] (legende) at (Frame.south) {
\begin{tabular}{cl}
  \begin{tikzpicture}    \draw (0,0) circle (0.06); \end{tikzpicture}
  & classical PRV component\\
\begin{tikzpicture}    \draw[green] (0,0) circle (0.06); \end{tikzpicture}
& component obtained with Theorem~\ref{th:main}\\
\begin{tikzpicture}    \draw[red] (-0.2,0.1) -- (0.2,0.1); \end{tikzpicture}
& line segments parallel to $\alpha_1$ or $\alpha_3$\\
\begin{tikzpicture}    \draw[blue,dashed] (-0.2,0.1) -- (0.2,0.1); \end{tikzpicture}
&line segments parallel to $\alpha_2$\\
\begin{tikzpicture}    \draw[magenta] (-0.06,0.06) -- (0.06,-0.06);
\draw[magenta] (-0.06,-0.06) -- (0.06,0.06);
 \end{tikzpicture}
&component obtained with Theorem~\ref{th:orth}
\end{tabular}
};
\end{tikzpicture}
 \caption{some weights of $V_{{\rm Sp}_6}(2,0,4)\otimes V_{{\rm Sp}_6}(2,0,2)$}
  \label{fig:Sp6-1}
\end{figure}

In this example, we observe that Theorem \ref{th:orth} gives two new
components in addition of those obtained by Theorem \ref{th:main}. In
fact, Theorem \ref{th:orth} gives here two rectangles, one of them
being truncated by the dominant chamber (see Figure~\ref{fig:Sp6-1}).
\end{ex}

\section{A specific result for tensor product decomposition}
\label{sec:sym}

From now on, we specialize to the case where $G$ is diagonally embedded in $\hG=G\times G$. 
Fix two dominant weights of $G$.
Then, Theorem~\ref{th:main} produces line segments in $X(T)\otimes\QQ$
parallel to any simple roots of $G$.
In this section, we use the symmetry induced by duality, in order to
produce similar line
segments whose direction is a root but not necessarily a simple one.

Let $\mu$ and $\nu$ be two dominant weights of $G$.
We say that a non-necessarily dominant weight $\lambda$ is a {\em virtual PRV component} (with respect
to $\mu$ and $\nu$) if there exists
$v$ and $w$ in $W$ such that $\lambda=v\mu+w\nu$.
Now we state the main result of this section. 

\begin{theo}\label{th:mainsym}
Let $v$ and 
 $w$ be two elements of $W$, and
let $\beta$ be a root of $G$. 

The line through the virtual PRV component
$\lambda_1=v\mu+w\nu$ in the direction $\beta$ contains the following four
virtual PRV components: $\lambda_1$, $\lambda_2= v\mu+s_\beta w\nu$,
$\lambda_3=s_\beta v\mu+w\nu$ and $\lambda_4=s_\beta v\mu+s_\beta
w\nu$. Choose $i$ and $j$ such that  $\langle \lambda_i,\,\beta^\vee\rangle$ and $\langle
\lambda_j,\,\beta^\vee\rangle$ are two largest  
among the family $(\langle
\lambda_l,\,\beta^\vee\rangle)_{l=1,2,3,4}$. 
Denote by $S_{\beta,v,w,\mu,\nu}$ the line segment in
$X(T)\otimes\QQ$ whose end points are $\lambda_i$ and $\lambda_j$.

Suppose that at least one of the following roots
  $\beta$, $v^{-1}\beta$ or $w^{-1}\beta$ is simple.  Then, for any
$\lambda\in
S_{\beta,v,w,\mu,\nu}\cap X^+(T)$, the irreducible $G$ module
  $V_G(\lambda)$ occurs with multiplicity at least one in $V_G(\mu)\otimes V_G(\nu)$.
\end{theo}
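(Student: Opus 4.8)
The plan is to deduce Theorem~\ref{th:mainsym} from Theorem~\ref{th:main} (equivalently Theorem~\ref{th:tech}) applied in the setting $G\subset\hG=G\times G$, by exploiting the duality symmetry of tensor product multiplicities. Recall that $V_G(\lambda)$ occurs in $V_G(\mu)\otimes V_G(\nu)$ if and only if $V_G(\mu)$ occurs in $V_G(\nu^*)\otimes V_G(\lambda)$, if and only if $V_G(\nu)$ occurs in $V_G(\mu^*)\otimes V_G(\lambda^*)$, and so on; there is a whole $S_3$-worth of such reformulations coming from the fact that $\dim\Hom_G(\mathbb C,V_G(\mu_1)\otimes V_G(\mu_2)\otimes V_G(\mu_3))$ is symmetric in the three arguments (up to dualizing). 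In the language of $\hG=G\times G$, the simple roots $\hat\Delta$ are two copies of $\Delta$, each $\alpha\in\Delta$ lies in $\Delta_2$ with $\halpha_1=(\alpha,0)$ and $\halpha_2=(0,\alpha)$, the restriction map $\rho$ is addition $X(T)\oplus X(T)\to X(T)$, and $\hW=W\times W$. Thus Theorem~\ref{th:main}, for a fixed weight $\hnu=(\mu,\nu)$ and element $\hw=(v,w)\in W\times W$ with $\rho(\hw\hnu)=v\mu+w\nu$ dominant, produces exactly the four virtual PRV components $\lambda_1,\dots,\lambda_4$ of the statement (for $\beta$ a \emph{simple} root), together with the segment between the two whose $\beta^\vee$-pairing is largest. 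So the case ``$\beta$ simple'' of Theorem~\ref{th:mainsym} is literally Theorem~\ref{th:main} in this special embedding.

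Next I would handle the case ``$v^{-1}\beta$ simple'' (the case ``$w^{-1}\beta$ simple'' being symmetric, by swapping the two tensor factors, which is harmless). Write $\alpha=v^{-1}\beta\in\Delta$. The idea is to apply the already-proved case to a \emph{different} ordered pair of weights for which $\beta$ \emph{becomes} the simple root $\alpha$ after conjugating by an appropriate Weyl element. Concretely, use the identity $\l\lambda,\beta^\vee\r=\l v^{-1}\lambda,\alpha^\vee\r$ and the reflection identity $s_\beta=v s_\alpha v^{-1}$; then $s_\beta v\mu=v s_\alpha\mu$, so $\lambda_3=v(s_\alpha\mu)+w\nu$ and $\lambda_1=v\mu+w\nu$ are the two PRV components, for the pair $(\mu,\nu)$, attached to the simple root $\alpha$ and to the Weyl elements $(v,w)$ and $(vs_\alpha,w)$ — these differ only in the \emph{first} factor. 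This is precisely the ``one-factor'' special case already covered: it is Theorem~\ref{th:tech} with the simple root $\alpha$, $\hw=(v,w)$, and the reflection $s_{\halpha_1}=(s_\alpha,1)$ acting only in the first coordinate. Running the argument of Section~\ref{sec:proof:th:tech} (or, equivalently, the reduction in Section~\ref{sec:reduction} followed by Theorem~\ref{th:tech}) with $\hw\hnu=(v\mu,w\nu)$ yields the segment between $\lambda_1$ and $\lambda_3$, and — after possibly replacing $\hw$ by $s_{\halpha_1}\hw$ exactly as in the two-case reduction of Section~\ref{sec:reduction} — between $\lambda_2$ and $\lambda_4$ as well. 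Since the four $\lambda_l$ are collinear along $\beta$, and the two of largest $\beta^\vee$-pairing are the genuine endpoints of the union of these subsegments (this is the same combinatorial bookkeeping as the choice of $\halpha_j$ in Theorem~\ref{th:main}), one recovers all of $S_{\beta,v,w,\mu,\nu}\cap X^+(T)$.

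The one genuinely new technical point — and the step I expect to be the main obstacle — is that in the case $v^{-1}\beta$ simple, the root $\beta$ itself need not be simple, so the geometric argument of Section~\ref{sec:asymptotic} cannot be applied verbatim: there the key inputs were that $\rho(\halpha_1^\vee)+\rho(\halpha_2^\vee)=\alpha^\vee$ and that the relevant $S$-fixed component $C\cong(\mathbb P^1)^3$ sits inside $G/B\times\hG/\hB$ with the line bundle restricting to $\O(a)\boxtimes\O(b)\boxtimes\O(c)$ with $a=\l\nu,\alpha^\vee\r$, etc. When we re-parametrize by $\alpha=v^{-1}\beta$, the natural thing is to translate everything by $v$: replace $\mu$ by $\mu$ but track the $T$-weight multiplicities along $v\alpha^\vee=\beta^\vee$ instead of $\alpha^\vee$. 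The point is that the weight-space $V_G(\lambda)^{(T)_{\lambda-l\beta}}$ is one-dimensional for $0\le l\le\l\lambda,\beta^\vee\r$ whenever $\beta$ is a root (it lies on an $\SL_2(\beta^\vee)$-string through an extremal weight), not only for $\beta$ simple; this is exactly what is needed for the multiplicity-one argument of Section~\ref{sec:mult_one}–\ref{sec:conclusion} to go through. The asymptotic/GIT step of Section~\ref{sec:asymptotic} only ever used $\alpha$ simple through the identification of $X_\hv$ and the surjectivity statement from \cite{Br:GammaGH}, both of which depend on $\hw$ and $\hnu$ but not on whether the \emph{image} root is simple; so I would reorganize the proof of Theorem~\ref{th:tech} to isolate that it really only needs $\hw^{-1}\halpha_1$ (a simple root of $\hG$, automatically) and never $\rho(\halpha_1)=\alpha$ to be simple — making the passage to non-simple $\beta$ a formality. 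In short: reduce to Theorem~\ref{th:tech} for the embedding $G\subset G\times G$, observe that its proof is insensitive to simplicity of the $G$-root $\rho(\halpha_i)$, and then conjugate by $v$ (or $w$) to convert ``$\beta$ simple'' into ``$v^{-1}\beta$ (or $w^{-1}\beta$) simple''.
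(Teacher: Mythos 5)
Your first paragraph (the case $\beta$ simple is Theorem~\ref{th:tech} for the diagonal embedding $G\subset G\times G$) is correct and matches the paper. But your reduction of the case ``$v^{-1}\beta$ simple'' has a genuine gap, in two places. First, you identify the segment $[\lambda_1,\lambda_3]$ with an instance of Theorem~\ref{th:tech} for $\hw=(v,w)$ and the reflection $s_{\halpha_1}=(s_\alpha,1)$. This confuses left and right multiplication: Theorem~\ref{th:tech} replaces $\hw$ by $s_{\halpha_1}\hw$, whose first component is $s_\alpha v$, producing the endpoint $s_\alpha v\mu+w\nu$ and a segment in the direction of the \emph{simple} root $\alpha$; what you need is $\lambda_3=s_\beta v\mu+w\nu=vs_\alpha\mu+w\nu$, and the segment $[\lambda_1,\lambda_3]$ points in the direction $\beta=v\alpha$, which is in general not simple. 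To realize $(vs_\alpha,w)$ as $s_{\hat\gamma}\hw$ you would need $\hat\gamma=(\beta,0)$, which is not a simple root of $\hG$, so this case is not an instance of Theorem~\ref{th:tech} as stated. Second, the repair you propose --- rerunning the proof of Theorem~\ref{th:tech} for a non-simple direction root --- breaks precisely at the multiplicity-one step of Section~\ref{sec:mult_one}: it is false that $\dim V_G(\lambda)^{(T)_{\lambda-l\beta}}\leq 1$ for $0\leq l\leq\langle\lambda,\beta^\vee\rangle$ when $\beta$ is a non-simple root. For $G=\SL_3$, $\lambda=\beta=\theta$ the highest root, the weight $\lambda-\beta=0$ has multiplicity $2$ in the adjoint representation, while $\langle\lambda,\beta^\vee\rangle=2$. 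Lying on a $\beta$-string through an extremal weight guarantees that the weight occurs, not that it occurs once; multiplicity one uses that $l\beta$ admits a unique decomposition as a sum of positive roots, which is exactly where simplicity of the direction root enters. So the ``formality'' at the end of your third paragraph is the false step.

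The paper avoids all of this by actually deploying the $S_3$-symmetry you mention in your first paragraph but never use: it rewrites $\lambda=v\mu+w\nu-k\beta$ as $\mu=v'\lambda+w'\nu'-k'\alpha$ with $\alpha=v^{-1}\beta$ simple, $\nu'=-w_0\nu$, $v'=v^{-1}$, $w'=s_\alpha v^{-1}ww_0$ and $k'=\langle v^{-1}w\nu,\alpha^\vee\rangle-k$, checks that the inequalities~(\ref{eq:sym}) transform into the corresponding inequalities for $k'$, and applies the already-proved simple case to the triple $(\mu;\lambda,\nu')$ to conclude that $V_G(\mu)$ occurs in $V_G(\lambda)\otimes V_G(\nu)^\ast$, hence that $V_G(\lambda)$ occurs in $V_G(\mu)\otimes V_G(\nu)$. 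In that reformulation the direction root really is simple, so no extension of the geometric argument is needed. Your proof would be complete if you replaced your second and third paragraphs by this duality step (the case $w^{-1}\beta$ simple being symmetric), together with the paper's reduction from Theorem~\ref{th:mainsym} to its technical form, which normalizes $\langle v\nu,\beta^\vee\rangle$ and $\langle w\mu,\beta^\vee\rangle$ to be non-negative so that the two largest $\langle\lambda_l,\beta^\vee\rangle$ are attained at $\lambda_1$ and $\lambda_2$.
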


\begin{remark}
\begin{enumerate}
\item If $\langle
\lambda_i,\,\beta^\vee\rangle= \langle
\lambda_j,\,\beta^\vee\rangle$ then the
line segment is reduced to a point.
\item Note that in the case of a simple direction, if
  $S_{\beta,v,w,\mu,\nu}\cap X^+(T)$ is not empty, the line segment $S_{\beta,v,w,\mu,\nu}$ defined as above
always contains a PRV component (see \cite[Proposition 2]{MPR}). But
if the direction is not simple, there exist line
segments intersecting $X^+(T)$ and that contain no PRV component (see the example at the end
of the section).
\end{enumerate}
\end{remark}

As in the general case, we rewrite Theorem~\ref{th:mainsym} as follows.

\begin{theo}\label{th:techsym}
Let $\lambda$, $\mu$, and $\nu$ be three dominant weights of $G$, let $v$, $w$ be two
elements of $W$, and let $\beta$ be a root of $G$ such that one of
the following roots $\beta$, $v^{-1}\beta$ or $u^{-1}\beta$ is simple. Suppose
that $\lambda=v\mu+w\nu-k\beta$ where $k$ is an integer satisfying: 
\begin{subeqnarray}\label{eq:sym}
k & \geq & 0,\\
k & \leq & \langle v\nu,\,\beta^\vee\rangle,\\
k & \leq  & \langle w\mu,\,\beta^\vee\rangle.
\end{subeqnarray}
 
Then,
the irreducible $G$-module $V_G(\lambda)$ occurs with multiplicity at
least one in $V_G(\mu)\otimes V_G(\nu)$.
\end{theo}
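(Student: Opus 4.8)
The plan is to deduce Theorem~\ref{th:techsym} from the already-established Theorem~\ref{th:tech}, applied to the \emph{diagonal} embedding $G\subset\hG=G\times G$, which (as recalled in the introduction) is spherical of minimal rank. Throughout, let $c_{\mu\nu}^{\lambda}$ denote the multiplicity of $V_G(\lambda)$ in $V_G(\mu)\otimes V_G(\nu)$ — the number to be bounded below by $1$ — and for a weight $\sigma$ write $\sigma^{\ast}:=-w_0\sigma$, so that $V_G(\sigma)^{\ast}\cong V_G(\sigma^{\ast})$. I will use two elementary symmetries: $c_{\mu\nu}^{\lambda}=c_{\nu\mu}^{\lambda}$, and $c_{\mu\nu}^{\lambda}=c_{\nu^{\ast}\lambda}^{\mu}$; the second follows from the identity $c_{\mu\nu}^{\lambda}=\dim\bigl(V_G(\mu)\otimes V_G(\nu)\otimes V_G(\lambda)^{\ast}\bigr)^{G}$ together with the facts that this quantity is a symmetric function of the three modules and is unchanged when each of them is replaced by its dual.

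\emph{Step 1: the case where $\beta$ is simple.} For $G\subset G\times G$ one has $\hB=B\times B$, so $\hT=T\times T$ and $\rho(\chi_1,\chi_2)=\chi_1+\chi_2$; every simple root $\alpha$ of $G$ lies in $\Delta_2$, its two preimages in $\hat\Delta$ being the orthogonal roots $(\alpha,0)$ and $(0,\alpha)$. I would apply Theorem~\ref{th:tech} with $\hnu=(\mu,\nu)$, $\hw=(v,w)$ and $\alpha=\beta$: then $\rho(\hw\hnu)=v\mu+w\nu$, $\langle\hw\hnu,(\beta,0)^{\vee}\rangle=\langle v\mu,\beta^{\vee}\rangle$, $\langle\hw\hnu,(0,\beta)^{\vee}\rangle=\langle w\nu,\beta^{\vee}\rangle$, and $V_{\hG}(\hnu)=V_G(\mu)\otimes V_G(\nu)$, so that hypotheses~\eqref{eq:1} become exactly~\eqref{eq:sym} and Theorem~\ref{th:tech} gives the conclusion.

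\emph{Step 2: the case where $v^{-1}\beta$ (resp.\ $w^{-1}\beta$) is simple.} The case $w^{-1}\beta$ simple reduces to the case $v^{-1}\beta$ simple by exchanging $(\mu,v)$ with $(\nu,w)$, which is legitimate because $\lambda=v\mu+w\nu-k\beta$ and the inequalities~\eqref{eq:sym} are symmetric under this exchange and $c_{\mu\nu}^{\lambda}=c_{\nu\mu}^{\lambda}$. So set $\alpha:=v^{-1}\beta\in\Delta$ and invoke $c_{\mu\nu}^{\lambda}=c_{\nu^{\ast}\lambda}^{\mu}$. From $\lambda=v\mu+w\nu-k\,v\alpha$ one gets $\mu=v^{-1}\lambda-(v^{-1}w)\nu+k\alpha$; since $-(v^{-1}w)\nu=(v^{-1}ww_0)\cdot\nu^{\ast}$ and $\langle (v^{-1}ww_0)\nu^{\ast},\alpha^{\vee}\rangle=-\langle w\nu,\beta^{\vee}\rangle$, this rewrites as
\[
\mu=(s_\alpha v^{-1}ww_0)\cdot\nu^{\ast}+v^{-1}\cdot\lambda-k'\alpha,\qquad k':=\langle w\nu,\beta^{\vee}\rangle-k .
\]
Now I would apply Step~1 to $V_G(\nu^{\ast})\otimes V_G(\lambda)$ with this virtual PRV datum, the simple direction $\alpha$, and the integer $k'$: the hypotheses~\eqref{eq:sym} translate precisely into $k'\ge 0$ (from $k\le\langle w\nu,\beta^{\vee}\rangle$), $k'\le\langle(s_\alpha v^{-1}ww_0)\nu^{\ast},\alpha^{\vee}\rangle=\langle w\nu,\beta^{\vee}\rangle$ (from $k\ge 0$), and $k'\le\langle v^{-1}\lambda,\alpha^{\vee}\rangle=\langle\lambda,\beta^{\vee}\rangle$ (which holds because $\langle\lambda,\beta^{\vee}\rangle=\langle v\mu,\beta^{\vee}\rangle+\langle w\nu,\beta^{\vee}\rangle-2k$ and $k\le\langle v\mu,\beta^{\vee}\rangle$). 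Step~1 then yields $V_G(\mu)\subset V_G(\nu^{\ast})\otimes V_G(\lambda)$, i.e.\ $c_{\nu^{\ast}\lambda}^{\mu}\ge 1$, hence $c_{\mu\nu}^{\lambda}\ge 1$. Theorem~\ref{th:mainsym} then follows from Theorem~\ref{th:techsym} exactly as in Section~\ref{sec:reduction}.

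The part I expect to be the main obstacle is the bookkeeping in Step~2: among the several forms taken by $c_{\mu\nu}^{\lambda}$ under its symmetries one has to single out $c_{\nu^{\ast}\lambda}^{\mu}$, the one for which the ``$+k\alpha$'' shift that appears when $\mu$ is expressed through $\nu^{\ast}$ and $\lambda$ can be absorbed into a \emph{different} virtual PRV component with a genuinely \emph{non-negative} coefficient $k'=\langle w\nu,\beta^{\vee}\rangle-k$; it is exactly in this absorption that both upper bounds on $k$ in~\eqref{eq:sym} get consumed, and one must also use the $\beta\leftrightarrow-\beta$ symmetry of the statement to normalize which of the three roots is taken to be simple.
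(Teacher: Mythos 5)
Your proposal is correct and follows essentially the same route as the paper: the simple case is the diagonal-embedding specialization of Theorem~\ref{th:tech}, and the non-simple case is reduced to it via exactly the same rewriting $\mu=v^{-1}\lambda+s_\alpha v^{-1}ww_0(-w_0\nu)-k'\alpha$ with $k'=\langle w\nu,\beta^\vee\rangle-k$ and the duality $c_{\mu\nu}^{\lambda}=c_{\nu^{\ast}\lambda}^{\mu}$. The only discrepancy is that you verify the inequalities in the form $k\le\langle v\mu,\beta^\vee\rangle$, $k\le\langle w\nu,\beta^\vee\rangle$ rather than the crossed pairings printed in \eqref{eq:sym}; this is the version actually forced by Theorem~\ref{th:tech} and used in the paper's own deduction of Theorem~\ref{th:mainsym}, so you have silently corrected what appears to be a typo in the statement.
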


\begin{proof}
The case where $\beta$ is simple is not new: it is the second result
of our preceding work \cite{MPR}, or the simple translation of
Theorem~\ref{th:tech}. 
The two other cases can be deduce from this one. Indeed, assume that
$\alpha=v^{-1}\beta$ is simple. Rewrite the condition
\begin{eqnarray}
\lambda=v\mu+w\nu-k\beta\label{eq:linesym}
\end{eqnarray}
as $\mu=v^{-1}\lambda-v^{-1}w\nu+k\alpha$, or equivalently as 
\begin{eqnarray}
\mu=v^{-1}\lambda+s_\alpha v^{-1}ww_0(-w_0\nu)+(k-\langle
v^{-1}w\nu,\,\alpha^\vee\rangle)\alpha.\label{eq:linesym3}
\end{eqnarray}
Setting $\nu'=-w_0\nu$,
$v'=v^{-1}$, $w'=s_\alpha v^{-1}ww_0$ and $k'=\langle
v^{-1}w\nu,\,\alpha^\vee\rangle-k$, equation (\ref{eq:linesym3})
becomes: 
\begin{eqnarray}
\mu=v'\lambda+w'\nu'-k'\alpha\,.\label{eq:linesym4}
\end{eqnarray}
One can check that conditions (\ref{eq:sym}) are equivalent to 
\begin{eqnarray*}
k'& \geq & 0,\\
k' & \leq & \langle v'\nu',\,\alpha^\vee\rangle,\\
k' & \leq & \langle w'\lambda',\,\alpha^\vee\rangle.
\end{eqnarray*}

Hence, Theorem~\ref{th:tech} or \cite[Theorem 2]{MPR} implies that
$V_G(\mu)$ appears in the tensor product $V_G(\lambda)\otimes
V_G(\nu')$.  But as $V_G(\nu')=V_G(-w_0\nu)=V_G(\nu)^\ast$,
we deduce that $V_G(\lambda)$ appears in $V_G(\mu)\otimes V_G(\nu)$.

\end{proof}


\begin{proof}[Proof of: Theorem~\ref{th:techsym} implies
    Theorem~\ref{th:mainsym}] 
By eventually changing $v$
by $s_\beta v$ and $w$ by $s_\beta w$, we
may assume that the two values $\langle v\nu,\,\beta^\vee\rangle$ and 
$\langle w\mu,\,\beta^\vee\rangle$ are non-negative. Assume also
that $\langle v\nu,\,\beta^\vee\rangle\geq\langle
w\mu,\,\beta^\vee\rangle$ so that two largest values among the family $(\langle
\lambda_l,\,\beta^\vee\rangle)_ {l=1,2,3,4}$ are $\langle
\lambda_1,\,\beta^\vee\rangle$ and $\langle \lambda_2,\,\beta^\vee\rangle$. 
A weight $\lambda$ belongs to the line segment
$S_{\beta,v,w,\mu,\nu}=[\lambda_1,\lambda_2]$ if and only if $\lambda=v\mu+w\nu-k\beta$ with
$0\leq k\leq \langle v\nu,\beta^\vee\rangle$ and hence we can conclude with
Theorem~\ref{th:techsym}. 
\end{proof}

The following theorem is obtained by combining
Theorem~\ref{th:orth} and Theorem~\ref{th:mainsym}.

\begin{theo}\label{th:orthsym}
Let $v$ and $w$ be two elements of the Weyl group, and
let $(\beta^i)_{1\leq i\leq s}$ be $s$ pairwise orthogonal roots. Suppose that there exists
$u\in\{1,v,w\}$ such that for any $i\in\{1,\dots,s\}$, $u^{-1}\beta^i$
is simple.

 Denote by $R_{\beta^1,\ldots,\beta^s,v,w,\mu,\nu}$ the
 hyperrectangle generated by the $s$ pairwise orthogonal line segments
 $(S_{\beta^i,v,w,\mu,\nu})_{i\in\{1,\ldots,s\}}$ defined in Theorem~\ref{th:mainsym}. 
 
Then for any $\lambda\in R_{\beta^1,\ldots,\beta^s,v,w,\mu,\nu} \cap
X^+(T)$, the irreducible $G$-module $V_G(\lambda)$ occurs with
multiplicity at least one in $V_G(\mu)\otimes V_G(\nu)$.
\end{theo}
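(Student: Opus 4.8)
The plan is to combine the two refinements exactly as one would expect from the structure of the proofs of Theorems~\ref{th:orth} and \ref{th:mainsym}: Theorem~\ref{th:mainsym} lets us replace a simple direction by an arbitrary root direction via duality and a change of $(v,w)$, while Theorem~\ref{th:orth} lets us inflate a single line segment into a hyperrectangle by passing from one $\SL_2$ to a product of pairwise commuting $\SL_2$'s. So first I would reduce, as in Sections~\ref{sec:reduction} and \ref{sec:proof:th:tech}, to a statement of the following shape: if $\lambda = v\mu + w\nu + \sum_{i=1}^s k^i\beta^i$ where the integers $k^i$ satisfy, for each $i$, three inequalities of the type $(\ref{eq:sym})$ (with $\beta$ replaced by $\beta^i$ and the appropriate reindexing so that the two largest values of $\langle\lambda_l,(\beta^i)^\vee\rangle$ are selected), then $V_G(\lambda)$ occurs in $V_G(\mu)\otimes V_G(\nu)$. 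That this reduction is legitimate follows verbatim from the argument proving ``Theorem~\ref{th:techsym} implies Theorem~\ref{th:mainsym}'', applied coordinate-by-coordinate in the directions $\beta^i$: changing $v$ to $s_{\beta^i}v$ and $w$ to $s_{\beta^i}w$ is harmless because the $\beta^i$ are pairwise orthogonal, so the reflections $s_{\beta^i}$ commute and the selections in the various directions do not interfere.

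Next I would dispose of the three cases of the hypothesis ``$u\in\{1,v,w\}$ with $u^{-1}\beta^i$ simple for all $i$''. If $u=1$, i.e.\ all the $\beta^i$ are simple, this is exactly Theorem~\ref{th:orth} transported to the tensor-product setting $G\subset G\times G$ (the same translation already used in the proof of Theorem~\ref{th:techsym} when $\beta$ is simple). If $u=v$ (the case $u=w$ being symmetric, exchanging the roles of $\mu$ and $\nu$), I would run the duality substitution of the proof of Theorem~\ref{th:techsym} simultaneously in all $s$ directions: set $\alpha^i = v^{-1}\beta^i$, which are simple and still pairwise orthogonal; rewrite the defining relation $\lambda = v\mu+w\nu+\sum_i k^i\beta^i$ as $\mu = v^{-1}\lambda + s_{\alpha^i}\!\cdots$ type expression in $\nu' = -w_0\nu$, with the same $v' = v^{-1}$, $w' = s_{\alpha^1}\cdots s_{\alpha^s}v^{-1}ww_0$ and shifted integers $k'^i = \langle v^{-1}w\nu,(\alpha^i)^\vee\rangle - k^i$; check that the $s$ triples of inequalities are preserved; then apply the already-established ``simple'' case (i.e.\ Theorem~\ref{th:orth} in the tensor-product form) to get $V_G(\mu)\subset V_G(\lambda)\otimes V_G(\nu')$, and finally dualize using $V_G(\nu') = V_G(\nu)^*$ to conclude $V_G(\lambda)\subset V_G(\mu)\otimes V_G(\nu)$. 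None of this requires revisiting the geometric heart of Section~\ref{sec:asymptotic}; it is pure bookkeeping once Theorem~\ref{th:orth} is in hand.

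Finally I would check that the hyperrectangle $R_{\beta^1,\ldots,\beta^s,v,w,\mu,\nu}$ is exactly the set of $\lambda$ arising from the reduced statement. This is the same computation as in the one-segment case (proof of ``Theorem~\ref{th:techsym} implies Theorem~\ref{th:mainsym}''): after normalizing so that $\langle v\nu,(\beta^i)^\vee\rangle$ and $\langle w\mu,(\beta^i)^\vee\rangle$ are non-negative for every $i$, a weight lies in $R$ iff it can be written $v\mu + w\nu - \sum_i k^i\beta^i$ with $0\le k^i\le \langle v\nu,(\beta^i)^\vee\rangle$ for each $i$, which is precisely the system of inequalities fed into the reduced statement; the pairwise orthogonality of the $\beta^i$ guarantees that the hyperrectangle is a genuine product of segments and that the multiplicity-one input needed from Theorem~\ref{th:orth} (namely $\dim V_G(\nu)^{(T)_{\nu - \sum_i l^i\alpha^i}} = 1$ for $0\le l^i\le\langle\nu,(\alpha^i)^\vee\rangle$) is available after the duality substitution. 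The only real subtlety — and the step I would be most careful about — is making sure that the simultaneous substitution in the case $u=v$ genuinely sends the $s$ orthogonal root directions $\beta^i$ to $s$ orthogonal \emph{simple} root directions with the inequality systems intact; everything else is a direct combination of results already proved above.
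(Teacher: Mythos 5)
Your proposal is correct and follows exactly the route the paper intends: the paper gives no written proof of Theorem~\ref{th:orthsym} beyond the single remark that it ``is obtained by combining Theorem~\ref{th:orth} and Theorem~\ref{th:mainsym}'', and your elaboration (reduce to a multi-direction analogue of Theorem~\ref{th:techsym}, then use the simultaneous duality substitution $v'=v^{-1}$, $w'=s_{\alpha^1}\cdots s_{\alpha^s}v^{-1}ww_0$, $\nu'=-w_0\nu$ to land in the simple-roots case covered by Theorem~\ref{th:orth}) is precisely that combination, with the key hypothesis --- a single $u\in\{1,v,w\}$ making all $u^{-1}\beta^i$ simple at once --- correctly identified as what makes the simultaneous substitution possible.
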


\begin{ex}
Consider the tensor product $V_{\operatorname{Sp}_6}(2,1,0)\otimes
V_{\operatorname{Sp}_6}(0,3,1)$. The irreducible components of this tensor product
obtained by Theorem \ref{th:mainsym} are represented in Figure~\ref{fig:Sp6-2}.

\begin{figure}[!h]
  \centering
  
  \begin{tikzpicture}
\node[%
minimum height=10.7cm,
minimum width=0.9\textwidth] (Frame) {\null};

\node[below] (image) at (Frame.north) {\includegraphics[height=8cm]{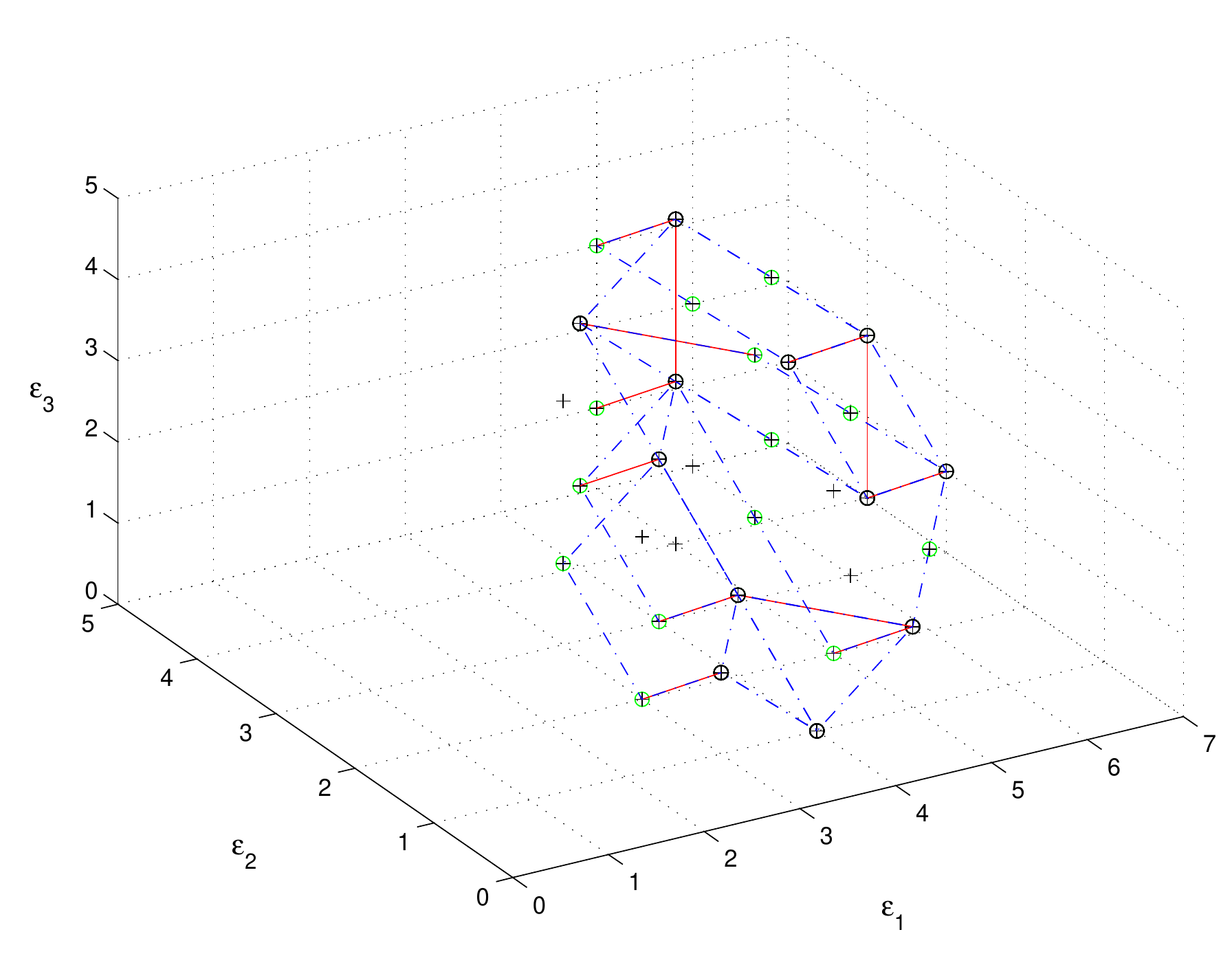}};
\node[draw,above] (legende) at (Frame.south) {
\begin{tabular}{cl}
  \begin{tikzpicture}
\draw (-0.06,0) -- (0.06,0);
\draw (0,-0.06) -- (0,0.06);
\end{tikzpicture}
& component of the tensor
product\\
  \begin{tikzpicture}    \draw (0,0) circle (0.06); \end{tikzpicture}
  & classical PRV component\\
\begin{tikzpicture}    \draw[green] (0,0) circle (0.06); \end{tikzpicture}
& component obtained with Theorem~\ref{th:main}\\
\begin{tikzpicture}    \draw[red] (-0.2,0.1) -- (0.2,0.1); \end{tikzpicture}
& line segments parallel to $\alpha_1$ or $\alpha_3$\\
\begin{tikzpicture}    \draw[blue,dashed] (-0.2,0.1) -- (0.2,0.1); \end{tikzpicture}
&line segments parallel to $\alpha_2$
\end{tabular}
};
\end{tikzpicture}

\caption{Weights of $V_{{\rm Sp}_6}(2,1,0)\otimes V_{{\rm Sp}_6}(0,3,1)$}
\label{fig:Sp6-2}
\end{figure}


There are 12 PRV
components, 
7 more weights given by Theorem~\ref{th:main}, 7 more weighs given by
Theorem~\ref{th:mainsym}, and no more with
Theorem~\ref{th:orthsym}. To summarize, we obtain 26 weights
(represented in Figure \ref{fig:Sp6-2}) among the
32 existing ones.

We observe a new phenomenon that cannot occur with Theorem
\ref{th:main}: the two end points of a line segment can be both outside
the dominant chamber (for example, the one in the lower left in Figure~\ref{fig:Sp6-2}).

Return to the example $V_{{\rm Sp}_6}(2,0,4)\otimes V_{{\rm
    Sp}_6}(2,0,2)$ seen in Example~\ref{ex:orth}. In this case, there are 14 PRV components, plus 17 new dominant weights with
 Theorem~\ref{th:main}, plus 2 new ones with Theorem \ref{th:orth},
 plus 16 new ones with Theorem \ref{th:mainsym}, plus 9 new ones with
Theorem \ref{th:orthsym}. 
To summarize, the results of this paper give 58 dominant weights among
the 83 existing ones. 
\end{ex}

\section{Appendix}

By lack of reference, 
we collect here technical results.

We begin with a result on the length of Weyl group elements.

\begin{lemma}\label{lemma:length}
Let $G$ be a reductive group. Fix a Borel subgroup containing a maximal torus. Let $W$ be the Weyl group of $G$. For any simple root $\alpha$, denote by $s_\alpha$ the corresponding reflection in $W$.
 
Let $\lambda$ be a dominant weight and $u$ be an element of $W$. Let $\beta$ be a simple root such that $\langle u\lambda,\beta^\vee\rangle>0$. Then $l(s_\beta u)=l(u)+1$. In particular we also have that if $\langle u\lambda,\beta^\vee\rangle<0$ then $l(u)=l(s_\beta u)+1$.
\end{lemma}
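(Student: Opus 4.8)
The plan is to reduce everything to the classical characterization of when left multiplication by a simple reflection increases length, combined with the elementary fact that a dominant weight pairs non-negatively with every positive coroot.

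First I would recall the standard fact (see e.g. \cite[Chapter~1]{Hum} or \cite{Bou}) that for a simple root $\beta$ and any $u\in W$ one has $l(s_\beta u)=l(u)+1$ if and only if $u^{-1}(\beta)$ is a positive root, and $l(s_\beta u)=l(u)-1$ otherwise. Thus it suffices to prove that $u^{-1}(\beta)\in R^+$, where $R^+$ denotes the set of positive roots of $(G,T)$.

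Next, suppose for contradiction that $u^{-1}(\beta)\notin R^+$, so that $\gamma:=-u^{-1}(\beta)\in R^+$. Using that the Weyl group carries coroots to coroots, namely $(u^{-1}\beta)^\vee=u^{-1}(\beta^\vee)$, we compute
\[
\l u\lambda,\,\beta^\vee\r=\l\lambda,\,u^{-1}(\beta^\vee)\r=\l\lambda,\,(u^{-1}\beta)^\vee\r=-\l\lambda,\,\gamma^\vee\r .
\]
Since $\lambda$ is dominant and $\gamma$ is a positive root, $\gamma^\vee$ is a non-negative integral combination of the simple coroots, hence $\l\lambda,\,\gamma^\vee\r\geq 0$, which gives $\l u\lambda,\,\beta^\vee\r\leq 0$ and contradicts the hypothesis $\l u\lambda,\,\beta^\vee\r>0$. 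Therefore $u^{-1}(\beta)\in R^+$, and consequently $l(s_\beta u)=l(u)+1$.

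Finally, for the last assertion, assume $\l u\lambda,\,\beta^\vee\r<0$ and apply the part just proved to the element $s_\beta u$: since $\l (s_\beta u)\lambda,\,\beta^\vee\r=\l u\lambda,\,s_\beta(\beta^\vee)\r=-\l u\lambda,\,\beta^\vee\r>0$, we obtain $l(s_\beta(s_\beta u))=l(s_\beta u)+1$, that is $l(u)=l(s_\beta u)+1$. The only points requiring any care are the identity $u^{-1}(\beta^\vee)=(u^{-1}\beta)^\vee$ and the non-negativity of $\l\lambda,\,\gamma^\vee\r$ for $\gamma\in R^+$; both are entirely standard, so I do not expect a serious obstacle here — the lemma is essentially a bookkeeping statement once the length criterion is invoked.
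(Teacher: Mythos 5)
Your proof is correct and rests on the same key observation as the paper's: $\l\lambda,\,(u^{-1}\beta)^\vee\r=\l u\lambda,\,\beta^\vee\r>0$ together with dominance of $\lambda$ forces $u^{-1}\beta$ to be a positive root. The only difference is cosmetic: you then invoke the standard criterion that $l(s_\beta u)=l(u)+1$ if and only if $u^{-1}\beta>0$, whereas the paper reproves that criterion on the spot by exhibiting the injection $\gamma\mapsto s_\beta\gamma$ between the inversion sets of $u^{-1}$ and $u^{-1}s_\beta$.
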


\begin{proof}
The length of $u$ equals the length of $u^{-1}$. It is the cardinality of $$\{\gamma \mbox{ positive root }\mid u^{-1}\gamma\mbox{ is a negative root}\}.$$
Let $\gamma$ be a positive root such that $u^{-1}\gamma$ is a negative root. We claim that $\gamma$ cannot be the simple root $\beta$. Indeed, since $\langle\lambda,u^{-1}\beta^\vee\rangle=\langle u\lambda,\beta^\vee\rangle>0$ and $\lambda$ is dominant, $u^{-1}\beta$ is positive. Then the application 
$$\begin{array}{ccc}
\{\gamma >0\mid u^{-1}\gamma<0\} & \longrightarrow & \{\gamma>0\mid u^{-1}s_\beta\gamma<0\}\\
\gamma & \longmapsto & s_\beta\gamma
\end{array}
$$
is well-defined because $u^{-1}s_\beta s_\beta\gamma=u^{-1}\gamma$ and
$s_\beta\gamma$ is a positive root for all positive roots $\gamma$
different from $\beta$. This application is also clearly injective, so
that the cardinality of the second set, which is the length of
$u^{-1}s_\beta$ and also $s_\beta u$, is bigger than the length of $u$.

But we have either $l(s_\beta u)=l(u)+1$ or $l(s_\beta u)=l(u)-1$ because $s_\beta$ is a simple reflection, so the proposition follows.
\end{proof}

Now, we give a result on the maximal torus of a spherical reductive subgroup.

\begin{lemma}\label{lemma:centra}
Suppose that $G$ is a spherical reductive subgroup  in a reductive group $\hG$.
Let $T$ be a maximal torus of $G$. Then, the centralizer of $T$ in $\hG$
is a maximal torus of $\hG$.
\end{lemma}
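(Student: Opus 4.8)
The plan is to show that the centralizer $\hT := Z_{\hG}(T)$ is connected, abelian, consists of semisimple elements, and has rank equal to $\mathrm{rk}\,\hG$; any such subgroup is a maximal torus. First I would observe that $\hT$ is a closed subgroup of $\hG$ containing $T$, and since $T$ is a torus, $Z_{\hG}(T)$ is connected (this is a standard fact: the centralizer of a torus in a connected algebraic group is connected and reductive, see e.g. Borel's book). So $\hT$ is a connected reductive subgroup, and it contains $T$ as a maximal torus of itself is what we want to upgrade; more precisely, $T\subset \hT$ and $T$ is central in $\hT$ by construction, hence $T$ lies in the connected center $Z(\hT)^\circ$, so $\hT$ has semisimple rank $0$ — i.e., $\hT$ is a torus — as soon as we know its own maximal torus has the same dimension as $T$. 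Equivalently, it suffices to show that a maximal torus of $\hT$ is a maximal torus of $\hG$, i.e. $\mathrm{rk}\,\hT = \mathrm{rk}\,\hG$.

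The key step, where the sphericity hypothesis enters, is the rank computation. Here is the approach I would take. Let $B$ be a Borel subgroup of $G$ containing $T$. Since $G$ is spherical in $\hG$, by definition a Borel subgroup $\hB'$ of $\hG$ has a dense orbit on $\hG/G$, equivalently $\hB'$ acts with a dense orbit on $\hG$ acting on the right by $G$; choosing base points suitably this means there is a Borel $\hB$ of $\hG$ with $\hB G$ dense in $\hG$, so that $\dim \hB + \dim G - \dim(\hB\cap G) = \dim\hG$. Now I want to compare $\hB\cap G$ with $B$. After conjugating, one arranges $\hB\cap G$ to contain $T$; since $\hB\cap G$ is a solvable subgroup of $G$ of the right dimension (from the density equation, $\dim(\hB\cap G) = \dim\hB + \dim G - \dim\hG = \dim B$ because $\dim\hB = (\dim\hG + \mathrm{rk}\,\hG)/2$ and $\dim B = (\dim G + \mathrm{rk}\,G)/2$, and minimal rank means $\mathrm{rk}\,\hG = \dim(\hG/G)\,$... ) — actually the cleanest route is: the minimal rank condition is precisely $\mathrm{rk}\,\hG - \mathrm{rk}\,G = \mathrm{rk}(\hG/G)$ where $\mathrm{rk}(\hG/G)$ is the spherical rank, and I would instead argue directly that $\dim Z_{\hG}(T) \geq \mathrm{rk}\,\hG$. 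For this, pick a maximal torus $\hS$ of $\hG$ containing a maximal torus of $\hT$; then $T\subset \hT \subset Z_{\hG}(\text{max torus of }\hT)$, and one shows using that $G\cdot\hB$ is dense (for an appropriate Borel $\hB \supset \hS$) that $\hS$ centralizes $T$, forcing $\hS \subset \hT$ and hence $\mathrm{rk}\,\hT \geq \dim\hS = \mathrm{rk}\,\hG$; the reverse inequality is automatic.

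The main obstacle I anticipate is making the density argument yield that a full maximal torus $\hS$ of $\hG$ can be chosen inside $Z_{\hG}(T)$ — i.e., that $T$ is contained in a maximal torus of $\hG$ whose centralizer (equal to that torus) is also the centralizer of $T$. Concretely one must rule out that $T$ is ``tilted'' relative to every maximal torus of $\hG$ in a way that makes its centralizer strictly larger than a torus (which would contradict $Z_{\hG}(T)$ being a torus) — but the point is really the rank count. I would handle this by invoking the structure theory of spherical subgroups of minimal rank directly: by definition of minimal rank (as in \cite{spherangmin}), $T$ extends to a maximal torus $\hT$ of $\hG$ with $\hT \cap G = T$, and then $\hT$ centralizes $T$ (being abelian) so $\hT \subseteq Z_{\hG}(T)$; conversely $Z_{\hG}(T)$, being connected reductive with $T$ in its center, has all its maximal tori equal to maximal tori of $\hG$ containing $T$ once we know one such maximal torus $\hT$ exists, and since $\hT$ is then central in the reductive group $Z_{\hG}(T)$ of the same rank, $Z_{\hG}(T) = \hT$ is itself that torus. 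If instead one does not want to quote the minimal-rank structure, the fallback is the dimension equation from density of $\hB G$ together with the elementary fact that a connected solvable group (here a Borel of $Z_{\hG}(T)$) meeting $G$ in exactly $B$ must have dimension $\dim B + \mathrm{rk}\,\hG - \mathrm{rk}\,G$, combined with $Z_{\hG}(T)$ being reductive with $T$ central — forcing its Borel to be a torus of dimension $\mathrm{rk}\,\hG$, so $Z_{\hG}(T)$ is a maximal torus.
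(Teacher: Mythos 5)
Your reduction does not isolate the right property, and this is a genuine gap. You propose to prove that $Z_{\hG}(T)$ is a maximal torus by showing $\rk Z_{\hG}(T)=\rk\hG$; but that equality holds for the centralizer of \emph{any} subtorus of \emph{any} connected reductive group, since every torus of $\hG$ lies in some maximal torus $\hat S$, and then $\hat S\subseteq Z_{\hG}(T)$. It is not equivalent to your earlier (correct) criterion $\rk Z_{\hG}(T)=\dim T$, and it implies nothing about $Z_{\hG}(T)$ being abelian. What actually has to be proved is that $T$ is a \emph{regular} torus of $\hG$: no root of $(\hG,\hat S)$ restricts trivially to $T$, equivalently the connected reductive group $Z_{\hG}(T)$ (generated by $\hat S$ and the root subgroups of roots vanishing on $T$) has empty root system. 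Your later steps do not supply this. The appeal to ``$T$ extends to a maximal torus $\hT$ with $\hT\cap G=T$'' uses the minimal-rank hypothesis, which the lemma does not assume (it assumes only sphericity, and in the paper this lemma is part of \emph{setting up} the minimal-rank framework, so invoking that framework here is circular). Even granting it, the conclusion ``$\hT$ is central in $Z_{\hG}(T)$ of the same rank, hence $Z_{\hG}(T)=\hT$'' assumes what is to be proved: only $T$, not a full maximal torus containing it, is known to be central in $Z_{\hG}(T)$. The fallback dimension count also starts from a false premise, since $Z_{\hG}(T)\cap G=Z_G(T)=T$, not $B$.

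The paper's argument is short and entirely different, and it is worth seeing where sphericity really enters. Since $G$ is spherical in $\hG$, Brion's finiteness theorem gives finitely many $G$-orbits on $\hG/\hB$. On each orbit $G/H$, the fixed points of the maximal torus $T$ of the reductive group $G$ form finitely many points (they lie in finitely many cosets of $N_G(T)$ modulo a subgroup containing a conjugate of $T$, and $N_G(T)/T$ is finite). Hence $(\hG/\hB)^T$ is finite. If $Z_{\hG}(T)$ were not a torus, this fixed-point locus would contain a positive-dimensional complete homogeneous space for $Z_{\hG}(T)$, a contradiction; so $T$ is regular and $Z_{\hG}(T)$ is a maximal torus. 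If you want to pursue your density-of-$\hB G$ route instead, the missing step is to convert the dimension identity $\dim(\hB\cap G)=\dim\hB+\dim G-\dim\hG$ into the statement that no root of $\hG$ kills $T$; as written, your draft never establishes regularity of $T$.
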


\begin{proof}
By \cite{Br:BGHfini}, since $G$ is spherical, it has finitely many
orbits in $\hG/\hB$. Since $G$ is reductive, $T$ has finitely many
fixed points in any $G$-orbit. Hence $T$ has finitely many fixed points in $\hG/\hB$. It follows that $T$ is regular in $\hG$.
\end{proof}

As a consequence of the preceding corollary, we have the following result.

\begin{coro}\label{coro:weyl}
In the situation of the preceding lemma, the Weyl group of $G$ is
canonically a subgroup of the Weyl group of $\hG$. 
\end{coro}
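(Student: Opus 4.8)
The plan is to build the inclusion directly at the level of normalizers of tori, using nothing beyond Lemma~\ref{lemma:centra} and the standard fact that a maximal torus of a connected reductive group is its own centralizer. First I would observe that, since $T$ is abelian, it centralizes itself, so $T\subseteq C_{\hG}(T)=\hT$, the latter being a maximal torus of $\hG$ by Lemma~\ref{lemma:centra}. Next comes the key point: any $n\in N_G(T)$ satisfies $nTn^{-1}=T$, hence $n\,C_{\hG}(T)\,n^{-1}=C_{\hG}(nTn^{-1})=C_{\hG}(T)$, i.e. $n\in N_{\hG}(\hT)$. This gives a group homomorphism $N_G(T)\longrightarrow N_{\hG}(\hT)$ (restriction of conjugation), and because $T\subseteq\hT$ it descends to a homomorphism
$$\phi\colon W=N_G(T)/T\longrightarrow N_{\hG}(\hT)/\hT=\hW.$$

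Then I would check that $\phi$ is injective. If $n\in N_G(T)$ has trivial image in $\hW$, then $n\in\hT=C_{\hG}(T)$, so $n$ centralizes $T$; since moreover $n\in G$, we get $n\in C_G(T)=T$ (here we use that $T$ is a maximal torus of the connected reductive group $G$). Hence the class of $n$ in $W$ is trivial, and $\phi$ is injective, identifying $W$ with a subgroup of $\hW$. The word \emph{canonical} then just records that no auxiliary choice entered the construction: once $T$ is fixed inside $G$, the torus $\hT$ is \emph{defined} as $C_{\hG}(T)$, and $\phi$ is determined. If desired I would add one sentence noting that this embedding is compatible with the restriction map $\rho\colon X(\hT)\to X(T)$, in the sense that $\rho(w\cdot\hmu)=w\cdot\rho(\hmu)$ for $w\in W$ and $\hmu\in X(\hT)$, which is immediate since for $n\in N_G(T)$ conjugation on characters commutes with restriction along $T\subseteq\hT$; this is the compatibility implicitly used around the definition of PRV components.

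There is essentially no serious obstacle here: the argument is formal apart from the two black boxes $C_{\hG}(T)=\hT$ (which is exactly Lemma~\ref{lemma:centra}) and $C_G(T)=T$ (a classical fact). The one subtlety worth stating cleanly, so the reader does not expect a harder input, is that well-definedness of $\phi$ does \emph{not} require knowing a priori that $N_{\hG}(\hT)\cap G=N_G(T)$; it follows at once from the observation that an element normalizing $T$ automatically normalizes its centralizer $\hT$.
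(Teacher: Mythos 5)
Your proposal is correct and follows essentially the same route as the paper: both deduce from $\hT=C_{\hG}(T)$ that $N_G(T)$ normalizes $\hT$ and then pass to the quotient by $\hT$ to obtain the injection $W\hookrightarrow\hW$. You merely spell out the injectivity check ($N_G(T)\cap\hT\subseteq C_G(T)=T$) that the paper leaves implicit.
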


\begin{proof}
The lemma implies that the normalizer of $T$ in $G$ normalizes $\hat T$.
In particular, it is contained in the normalizer of $\hat T$ in $\hat G$.
The injection of the corollary is obtained by taking the quotient of this 
inclusion by $\hat T$.
\end{proof}

\bibliographystyle{amsalpha}
\bibliography{prv2}

\bigskip
\noindent P.L. Montagard {\tt pierre-louis.montagard@math.univ-montp2.fr}\\
Universit{\'e} Montpellier II - CC 51-Place Eug{\`e}ne Bataillon - 34095 Montpellier Cedex 5 - France\\

\noindent B. Pasquier {\tt boris.pasquier@math.univ-montp2.fr}\\
Universit{\'e} Montpellier II - CC 51-Place Eug{\`e}ne Bataillon - 34095 Montpellier Cedex 5 - France\\

\noindent N. Ressayre {\tt nicolas.ressayre@math.univ-lyon1.fr}\\
Institut Camille Jordan - Universit{\'e} Lyon 1 - 69622 Villeurbanne cedex -France
\end{document}